\RequirePackage{algorithmicx}

\documentclass[]{siamart250211}

\usepackage{booktabs}
\usepackage[sort,noadjust]{cite}
\usepackage[siamart]{lindquist}

\usepackage{acro}
\acsetup{barriers/use=true,barriers/reset=true}

\DeclareAcronym{genp}{short=GENP,long=Gaussian elimination with no pivoting}
\DeclareAcronym{gepp}{short=GEPP,long=Gaussian elimination with partial pivoting}
\DeclareAcronym{getp}{short=GETP,long=Gaussian elimination with tournament pivoting}
\DeclareAcronym{gethp}{short=GEThP,long=Gaussian elimination with threshold pivoting}
\DeclareAcronym{beam}{short=BEAM,long=block elimination with additive modifications}

\DeclareAcronym{gmres}{short=GMRES,long=generalized minimal residual method}
\DeclareAcronym{fgmres}{short=FGMRES,long=flexible \acs{gmres}}
\DeclareAcronym{cbgmres}{short=CB-GMRES,long=compressed basis \acs{gmres}}
\DeclareAcronym{cg}{short=CG,long=conjugate gradients method}

\DeclareAcronym{cgs}{short = CGS, long = classical Gram-Schmidt}
\DeclareAcronym{mgs}{short = MGS, long = modified Gram-Schmidt}
\DeclareAcronym{cgsr}{short = CGS2, long = classical Gram-Schmidt with reorthogonalization}

\DeclareAcronym{rbt}{short=RBT, long=random butterfly transform}
\DeclareAcronym{srft}{short=SRFT, long=subsampled random Fourier transform}

\DeclareAcronym{svd}{short=SVD, long=singular value decomposition}
\DeclareAcronym{dft}{short=DFT, long=discrete Fourier transform}
\DeclareAcronym{fft}{short=FFT, long=fast Fourier transform}

\DeclareAcronym{blr}{short=BLR, long=block low-rank}
\DeclareAcronym{aca}{short=ACA, long=adaptive cross approximation}

\DeclareAcronym{csr}{short=CSR, long=compressed sparse row}
\DeclareAcronym{ilu}{short=ILU,long=incomplete LU}

\DeclareAcronym{magma}{short=MAGMA,long=Matrix Algebra on GPU and Multicore Architectures}
\DeclareAcronym{plasma}{short=PLASMA,long=Parallel Linear Algebra Software for Multicore Architectures}
\DeclareAcronym{slate}{short=SLATE,long=Software for Linear Algebra Targeting Exascale}
\DeclareAcronym{ecp}{short=ECP,long=Exascale Computing Project}
\DeclareAcronym{dag}{short=DAG, long=direct acyclic graph}
\DeclareAcronym{hbm}{short=HBM, long=high bandwidth memory}
\DeclareAcronym{hbm2}{short=HBM2, long=high bandwidth memory}

\usepackage{pgfplots}
\usepgfplotslibrary{statistics}


\usepackage{algpseudocode}

\newcommand{\Rho}{\mathrm{P}}
\newcommand{\blockgrowth}[2][]{\Rho^{#1}_{\!#2}}
\newcommand{\uniblockgrowth}[2][1]{\Rho^{\langle #1\rangle}_{#2}}
\newcommand{\maxrm}{\mathrm{max}}

\newcommand{\modmat}[2][]{M_{#2}^{#1}}

\newif\iffixlabel
\IfFileExists{/Users/luszczek/.zshrx}{\fixlabeltrue}{\fixlabelfalse}
\iffixlabel
\newcommand{\fixlabel}[1]{\label{{#1}}}
\else
\newcommand{\fixlabel}[1]{\label{#1}}
\fi

\title{The Stability\\of Block Eliminations\\and Additive Modifications%
\thanks{Submitted to the editors DATE.
Most of the analysis in the start of \cref{sec:block-lu} and in \cref{sec:block-lu:growth,sec:block-lu:blockwise} previously appeared in the first author's dissertation~\cite[Sec.~3.4.3]{lindquistReducingCommunicationSolution2023}.
\funding{This work was funded in part by the National Science Foundation under grant no.~2004541 and
the Exascale Computing Project, a collaborative effort of the U.S.
Department of Energy Office of Science and National Nuclear Security
Administration. \\
Research was sponsored by the Department of the Air Force Artificial Intelligence Accelerator and was accomplished under Cooperative Agreement Number FA8750-19-2-1000. The views and conclusions contained in this document are those of the authors and should not be interpreted as representing the official policies, either expressed or implied, of the Department of the Air Force or the U.S. Government. The U.S. Government is authorized to reproduce and distribute reprints for Government purposes notwithstanding any copyright notation herein.}}}

\author{Neil Lindquist\thanks{Innovative Computing Laboratory, University of Tennessee, Knoxville, TN, USA 37996 (\email{nlindquist@acm.org}, \email{dongarra@icl.utk.edu)}}
\and Piotr Luszczek\thanks{MIT Lincoln Laboratory, Lexington, MA, USA 02421 and Innovative Computing Laboratory, University of Tennessee, Knoxville, TN, USA 37902 (\email{luszczek@icl.utk.edu})}
\and Jack Dongarra\footnotemark[2] 
}

\headers{Block Eliminations and Additive Modifications}{Neil Lindquist, Piotr Luszczek, and Jack Dongarra}

\begin{document}

\maketitle

\begin{abstract}
The \ac{beam} method was recently proposed as a alternative to LU with partial pivoting requiring less communication.
Because of the novelty of \ac{beam}, the existing theoretical analysis is lacking.
To that end, we analyze both the numerical stability of the underlying block LU factorization and the effects of additive modifications.
For the block LU factorization, we are able to improve the previous results of Demmel et al.\ from being cubic in the element growth to merely quadratic.
Furthermore, we propose an alternative measure of element growth that is better aligned with block LU; this new measure of growth allows our analysis to apply to matrices that cannot be factored with pointwise LU.
In the second part, we analyzed the modifications produced by \ac{beam} and the effect they have on the condition number and growth factor.
Finally, we show that \ac{beam} will not apply any modifications in some cases that regular block LU can safely factor.
\end{abstract}
\begin{keywords}
Block LU, numerical stability, growth factor, LU factorization, Gaussian elimination
\end{keywords}
\begin{MSCcodes}
15A23, 65F05
\end{MSCcodes} 

\section{Introduction}
Large, dense systems of linear equations are commonly solved using LU with partial pivoting; however the communication is increasingly a bottleneck on modern supercomputers.
Recently, we proposed \acf{beam} as a cheaper alternative to pivoting~\cite{lindquistUsingAdditiveModifications2023}.
This method is able to achieve high performance by using the parallel dependencies of LU without pivoting,
but it achieves better numerical stability by factoring the diagonal blocks with an \acs{svd} and modifying diagonal blocks with small singular values.
However, there are several open questions about the numerical stability of the method, including the backward stability of \ac{beam}'s formulation of block LU and the effect of additive modifications.
We study these questions to provide a better understanding of the stability of \ac{beam}.

We begin by providing an outline of the \ac{beam} algorithm in \cref{sec:intro-beam}.
Then, \cref{sec:block-lu} analyzes block LU with a focus on the form that occurs in \ac{beam}.
We first propose a generalized definition of the growth factor in \cref{sec:block-lu:growth} that better supports the blocked nature of this factorization;
this growth factor is used in \cref{sec:block-lu:blockwise} to improve the backward error bound of block LU.
The analysis is continued in \cref{sec:block-lu:diag-dom} by showing that block LU is numerically stable on both diagonally dominant and block-diagonally dominant matrices.

After having considered the stability of block LU, we consider the effects of \ac{beam}'s additive modifications on the condition number and growth factor in \cref{sec:svals}.
Finally, we explore the conditions for which \ac{beam} does not apply any modifications and show that H-matrices and symmetric positive definite matrices will not receive modifications for sufficiently small \(\tau\).

%

\subsection{BEAM algorithm}
\label{sec:intro-beam}

The key idea of \ac{beam} is to apply a non-pivoted block LU, using the SVD to factor the diagonal blocks.
Then, if any of the singular values are below a preset tolerance, \(\tau\), they are modified to make the leading diagonal block less ill-conditioned.
Note that if the block size is sufficiently small, the increased computational cost of using the SVD for the diagonal blocks will be negligible compared to the overall cost of the solver.
Furthermore, avoiding the need to pivot significantly reduces the amount of data movement.
This allows \ac{beam} to outperform LU with partial pivoting or tournament pivoting for large dense matrices on distributed, heterogeneous computing clusters~\cite{lindquistUsingAdditiveModifications2023}.

For the sake of generality, we define the blocks based on the
monotonically increasing list of the (element-wise) row/column for which
each block starts, called \(\mathcal{I}\).
For notational convenience, \(n+1\) is added as the last element of
\(\mathcal{I}\) to act as ``past-the-end`` marker.
For example, a fixed block size, \(n_b\), has blocks described by \(\mathcal{I} = [1, n_b+1, 2n_b+1, \dots, (n_t-1)n_b+1, n+1]\) where \(n_t = \ceil{n/n_b}\) and using 1-based indexing.
For simplicity, we index blocks of a matrix with subscripts, as exemplified by
\[
	A_{i,j:k} = A[\mathcal{I}_{i}:\mathcal{I}_{i+1}\mathord{-}1,\, \mathcal{I}_{j}:\mathcal{I}_{k-1}\mathord{-}1]
\]
where \(\mathcal{I}_{i}\) is the \(i\)th element of \(\mathcal{I}\).
Note that to simplify notation, we index the blocks of the intermediate Schur-complements, \(A^{(k)}\), from \(k\) to \(n_t\) instead of \(1\) to \(n_t-k+1\).

\Cref{alg:beam} outlines the \ac{beam} algorithm.
\begin{algorithm}[tpb]
\caption{\Acf{beam}}
\fixlabel{alg:beam}
\begin{algorithmic}[1]
  \Procedure{\textsf{Factor\ac{beam}}}{$A, \tau$}
    \State \(n_t \gets \) the number of blocks in \(\mathcal{I}\)
    \State \(m \gets 0\) \Comment{Counter for the number of modifications}
    \State \(A^{(0)} \gets A\)
    \For{\(k = 1:n_t\)}
        \State \(U_{k}, \Sigma_{k}, V_{k}^T \gets \mathrm{SVD}(A_{k,k}^{(k-1)})\)
        \For{\(i = 1:n_b\)}
              \If{\(\Sigma_{k}[i] \leq \tau\)} \Comment{If the singular value is too small}
                \State \(m \gets m + 1\) \Comment{Record modification}
                \State \(\modmat{\Sigma}[m, m] \gets \tau - \Sigma_{k}[i]\)
                \State \(\modmat{U}[:, m] \gets [0, U_k[:, i]^T, 0]^T\)
                \State \(\modmat{V}[:, m] \gets [0, V_k[:, i]^T, 0]^T\)
                \State \(\Sigma_{k}[i] \gets \tau\) \Comment{Apply modification}
              \EndIf
        \EndFor
        \State \(\widetilde{L}_{k,k} \gets U_{k}\)
        \State \(\widetilde{R}_{k,k} \gets \Sigma_{k}V_{k}^T\)
        \State \(\widetilde{L}_{k+1:n_t, k} \gets A_{k+1:n_t, k}\,\widetilde{R}_{k,k}^{-1}\)
        \State \(\widetilde{R}_{k, k+1:n_t} \gets \widetilde{L}_{k,k}^{-1}\,A_{k, k+1:n_t}\)
        \State \(A_{k+1:n_t, k+1:n_t}^{(k)} \gets A_{k+1:n_t, k+1:n_t}^{(k-1)} -  \widetilde{L}_{k+1:n_t, k}\,\widetilde{R}_{k, k+1:n_t}\)
    \EndFor
    \If{\(m > 0\) and using Woodbury formula}
        \State \(\mathcal{C}_R \gets \modmat{\Sigma}\modmat[T]{V}\widetilde{R}^{-1}\)
        \State \(\mathcal{C}_L \gets \widetilde{L}^{-1}\modmat{U}\)
        \State \(\mathcal{C} \gets I - \mathcal{C}_R\mathcal{C}_L\)
        \State \(\mathcal{C}^{-1} \gets \mathsf{FACTOR}(\mathcal{C})\)
            \Comment{E.g., \ac{gepp}}
    \EndIf
  \EndProcedure
  \Procedure{\textsf{Solve\ac{beam}}}{$b$}
    \State \(x \gets \widetilde{L}^{-1}b\)
    \If{\(m > 0\) and using Woodbury formula}
        \State \(x \gets (I + \mathcal{C}_L\mathcal{C}^{-1}\mathcal{C}_R) x\)
    \EndIf
    \State \(x \gets \widetilde{R}^{-1}x\)
  \EndProcedure
\end{algorithmic}
\end{algorithm}
The user-provided input \(\tau = \hat{\tau}\norm{2}{A}\) controls how
small the singular values of the diagonal blocks can be before the
modifications are applied to increase them for better
numerical stability.
This algorithm is equivalent to factoring the modified matrix \({\widetilde{A} = A + M_UM_\Sigma M_V^T}\) with block LU.
The modification is represented by the product \(M_UM_\Sigma M_V^T\) where \(M_U\) and \(M_V^T\) are unitary block-diagonal and \(M_\Sigma\) is diagonal with elements being positive but less than \(\tau\).
Finally, note that we label the factors as \(L\) and \(R\) instead of the traditional \(L\) and \(U\);
this avoids potential confusion with the matrix of left-singular vectors, which is also traditionally denoted \(U\).

\Ac{beam} can optionally use the Woodbury formula to correct the modifications in the solve step.
When the Woodbury formula is not used, the modifications must be corrected by iterative refinement, although even with the Woodbury formula, iterative refinement is usually needed to correct for the larger growth factor incurred by \ac{beam}.
The computational cost of the Woodbury formula obviously depends on the number of modifications to be corrected.
Furthermore, previous experimental evidence suggests that, for ill-conditioned matrices, using the Woodbury formula is significantly more effective that relying solely on iterative refinement~\cite{lindquistUsingAdditiveModifications2023}.

\subsection{Matrix norms and their propeties}
\label{sec:intro-norms}
Strictly speaking, a particular matrix norm is only defined for a single matrix size.
Because the analysis of \ac{beam} and block LU requires working with submatrices of various sizes, we instead consider dimension-invariant families of matrix norms.
Such families are sets of matrix norms such that for any matrix, \(A\), and any partitioning thereof, \(\mathcal{I}\times \mathcal{J}\), the norms of appropriate size satisfy
\begin{equation}
	\label{eq:norm-families}
	\max_{\substack{i\in\mathcal{I}\\j\in\mathcal{J}}} \norm{}{A_{i,j}}
	\leq \norm{}{A}
	\leq \sum_{\substack{i\in\mathcal{I}\\j\in\mathcal{J}}} \norm{}{A_{i,j}}.
\end{equation}
This is satisfied by all of the usual norms, including those induced by vector \(\ell_p\) norms, the Schatten norms, and the elementwise norms.
A key result of this definition is that padding a matrix with zero rows or columns does not affect its norm.
We use dimension-invariant families of matrix norms in all further contexts involving matrices of multiple sizes without further comment.

Additionally, we recall two key norm properties.
A matrix norm, \(\norm{\alpha}{\cdot}\), is submultiplicative if for all conformal matrices \(A\) and \(B\), we have \(\norm{\alpha}{AB} \leq \norm{\alpha}{A}\norm{\alpha}{B}\).
Note that some authors limit the term ``matrix norm'' to submultiplicative norms;
we make no such limitation, due to the use of the max-norm in previous analyses~\cite{wilkinsonRoundingErrorsAlgebraic1963,demmelStabilityBlockLU1995}.
Submultiplicative norms include all matrix norms induced by vector \(\ell_p\) norms and the Frobenius norm.
A matrix norm, \(\norm{\alpha}{\cdot}\), is absolute if for any matrix \(A\), we have \(\norm{\alpha}{|A|}=\norm{\alpha}{A}\).
Absolute norms include the max-norm, both the 1- and \(\infty\)-operator norms, and the Frobenius norm.

Due to the block structure of factorizations studied here, it is useful
to define two related matrix norms induced on the same blocking factors with \(n_t\) being the number of block rows and \(m_t\) being the number of block columns.
For an inner norm \(\norm{\alpha}{\cdot}\), we define the block-max norm and the block-sum norm as
\[
	\norm{\max\alpha}{B} = \max_{\substack{1\leq i\leq m_t\\1\leq j\leq n_t}} \norm{\alpha}{B_{ij}}
	\qquad\text{and}\qquad
	\norm{\Sigma\alpha}{B} = \sum_{i=1}^{m_t}\sum_{j=1}^{n_t} \norm{\alpha}{B_{ij}}.
\]
These norms have several useful properties:
\begin{itemize}
\item \(m_t^{-1}n_t^{-1}\norm{\Sigma\alpha}{B} \leq \norm{\max\alpha}{B}
  \leq \norm{\alpha}{B} \leq \norm{\Sigma\alpha}{B} \leq m_tn_t
    \norm{\max\alpha}{B}\).
\item The block-sum norm is submultiplicative if and only if the inner norm is. But, the block-max norm is never submultiplicative.
\item Each block norm is absolute if and only if the inner norm is absolute.
\item Each block norm is invariant to transposition (or conjugate-transposition) if and only if the inner norm is also invariant.
\item If the blocks of a matrix, \(B\), are partitioned column-wise,
\(B = [B_1, B_2]\), then \(\norm{\max\alpha}{B} \!=
    \max(\norm{\max\alpha}{B_1}, \norm{\max\alpha}{B_2})\) and \(\norm{\Sigma\alpha}{B} \!= \norm{\Sigma\alpha}{B_1} + \norm{\Sigma\alpha}{B_2}\).  Likewise for row-wise partitioning.
These norm properties of partitioning-invariance proves useful in
establishing the numerical properties of block algorithms.
\end{itemize}
Finally, the norm-equivalencies can be tightened for specific norms.
For example,
\[
	\norm{1}{B} \leq m_t\norm{\max1}{B}, \qquad \norm{\infty}{B}
        \leq n_t\norm{\max\infty}{B},  \qquad\text{and}
\]\[
	m_t^{-1/2}n_t^{-1/2}\norm{\Sigma F}{B} \leq \norm{F}{B} \leq
        m_t^{1/2}n_t^{1/2}\norm{\max F}{B}.
\]

\section{Block LU Factorization}
\label{sec:block-lu}

\begin{algorithm}[tpb]
\caption{Block LU factorization of \(A\)}
\fixlabel{alg:block-lu}
\begin{algorithmic}[1]
	\State \(n_t \gets \) the number of blocks in \(\mathcal{I}\)
	\State \(A^{(1)} \gets A\)
	\For{\(k = 1:n_t\)}
		\State \(L_{k,k}R_{k,k} \gets A^{(k)}_{k,k}\)
			\(\fixlabel{alg:block_lu:diag}\)
		\State \(L_{k+1:n_t, k} \gets A^{(k)}_{k+1:n_t, k}\,R_{k,k}^{-1}\)
		\State \(R_{k, k+1:n_t} \gets L_{k,k}^{-1}\,A^{(k)}_{k, k+1:n_t}\)
		\State \(A^{(k+1)}_{k+1:n_t, k+1:n_t} \gets A^{(k )}_{k+1:n_t, k+1:n_t} -  L_{k+1:n_t, k}\,R_{k, k+1:n_t}\)
			\(\fixlabel{alg:block_lu:schur-comp}\)
	\EndFor
\end{algorithmic}
\end{algorithm}

To our knowledge, the best perturbation analysis for block LU is that of Demmel et al.~\cite{demmelStabilityBlockLU1995}.
Unfortunately, they limit their analysis to one particular formulation, with the diagonal blocks being factored into \(I\) and \(A_{ii}\).
We improve on their analysis, both by generalizing how the diagonal blocks are factored and by tightening the bound through the use of alternative measures of element growth (described in \ref{sec:block-lu:growth}).
Another interesting work in this area is by Dopico and Molera~\cite{dopicoPerturbationTheoryFactorizations2005}, in which they provide forward error bounds on \(L\) and \(U\) given a bound on the error \(A - \widehat{L}\widehat{U}\).

To analyze block LU, we focus on the general algorithm described by \cref{alg:block-lu};
this is the same basic algorithm as pointwise non-pivoted LU, except matrix operations replace scalar ones.
Furthermore, this is the core factorization logic of \cref{alg:beam} that is applied to the modified matrix \(\widetilde{A}\).

Many basic properties of pointwise LU have blockwise analogues.
Most notable is the analogue to strongly nonsingular matrices; a matrix is \textit{block strongly nonsingular} if each of it's leading principal submatrices aligned to the block partitioning are nonsingular.
This definition is critical as \cref{alg:block-lu} completes if and only if \(A\) is block strongly nonsingular.
Easily following from the definition, any strongly nonsingular matrix is also block strongly nonsingular; this is reflected in the fact that cache blocking for pointwise, non-pivoted LU is normally implemented as \cref{alg:block-lu} where \cref{alg:block_lu:diag} is an unblocked, non-pivoted LU routine.

\subsection{Growth Factors}
\label{sec:block-lu:growth}

Because element growth plays a significant role in the backwards error of block LU factorization,
we begin by proposing an extension of the traditional measure of growth that is better suited for blockwise elimination.
For any matrix norm, \(\norm{\alpha}{\cdot}\), we define a growth factor
maximized across the successive Schur complements \(A^{(k)}\) and
parameterized by both the matrix norm \(\alpha\) and the blocking scheme
\(\mathcal{I}\):
\begin{equation}
\label{eq:new-growth-factor}
\blockgrowth[\mathcal{I}]{\alpha} \stackrel{\mbox{\scriptsize def}}{=}
\frac{\max_{1 \leq k \leq |\mathcal{I}|}\norm{\alpha}{A^{(k)}}}{\norm{\alpha}{A}}.
\end{equation}
For notational simplicity, we omit the blocking factor when it is obvious from the context and we denote the growth for a constant block size, \(n_b\), as \(\uniblockgrowth[n_b]{\alpha}\).
Interestingly, the growth is independent of how the diagonal blocks are factored (in exact arithmetic) since it depends on only the Schur complements.
Note that Wilkinson's classic growth factor~\cite{wilkinsonRoundingErrorsAlgebraic1963} equals \(\uniblockgrowth{\maxrm}\),
that Amodio and Mazzia's growth factor~\cite{amodioNewApproachBackward1999} equals \(\uniblockgrowth{\infty}\),
and that George and Ikramov's growth factor for a subordinate matrix
norm \(\norm{\alpha}{\cdot}\)~\cite{georgeBlockLUFactorization2005}
equals \(\blockgrowth{\max\alpha}\).
Barlow and Zha's growth factor~\cite{barlowGrowthGaussianElimination1998} is related to, but strictly greater than, \(\uniblockgrowth{2}\);
the difference arises from their inclusion of elements from \(U\) in the numerator's norms.

Because \cref{eq:new-growth-factor} is parameterized for both the blocking and the norm, we provide the following theorem relating different versions of the measure.
Importantly, it allows us to bound the new growth factors in terms of Wilkinson's classic growth factor:
\begin{align}
	\label{eq:new-growth-vs-wilkinson}
	\tfrac{1}{n}\uniblockgrowth{\maxrm} \leq \blockgrowth{\alpha} &\leq n\uniblockgrowth{\maxrm}
	&
	\alpha \in \{1,2,\infty,F\}.
\end{align}

\begin{theorem}
	\label{thm:growth-relations}
	Let \(\norm{\alpha}{\cdot}\) and \(\norm{\beta}{\cdot}\) be matrix norms.
	Then, the following relations between growth factors hold:
	\begin{enumerate}
		\item If \(\mathcal{J}\subseteq\mathcal{I}\), then \(\blockgrowth[\mathcal{J}]{\alpha} \leq \blockgrowth[\mathcal{I}]{\alpha}\).
		\item If \(\mu^{-1}\norm{\alpha}{A} \leq \norm{\beta}{A} \leq \nu\norm{\alpha}{A}\) for any square matrix \(A\),
			then \[
				(\mu\nu)^{-1}\blockgrowth[\mathcal{I}]{\alpha} \leq \blockgrowth[\mathcal{I}]{\beta} \leq \mu\nu\blockgrowth[\mathcal{I}]{\alpha}.
			\]
	\end{enumerate}
\end{theorem}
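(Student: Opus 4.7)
The plan is to split into the two assertions, since Part~1 is a structural claim about the Schur complements produced by \cref{alg:block-lu} while Part~2 is an elementary consequence of the hypothesized norm equivalence.

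For Part~1, I would first invoke the classical fact that block LU in exact arithmetic is equivalent to pointwise LU: the trailing Schur complement obtained after eliminating all rows and columns with index strictly less than some boundary $j$ is independent of how those first $j-1$ columns were partitioned into blocks. Since $\mathcal{J}\subseteq\mathcal{I}$, every boundary of $\mathcal{J}$ is also a boundary of $\mathcal{I}$, so each intermediate matrix $A^{(k)}$ arising in the $\mathcal{J}$-blocked factorization coincides with the intermediate matrix at the corresponding step of the $\mathcal{I}$-blocked factorization. The $\mathcal{I}$-process merely produces additional intermediate matrices at the boundaries that do not lie in $\mathcal{J}$, so the maximum $\max_k \norm{\alpha}{A^{(k)}}$ for $\mathcal{J}$ is taken over a subset of the corresponding values for $\mathcal{I}$. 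Dividing each maximum by the common $\norm{\alpha}{A}$ then gives $\blockgrowth[\mathcal{J}]{\alpha} \leq \blockgrowth[\mathcal{I}]{\alpha}$.

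For Part~2, I would start from the definition of $\blockgrowth[\mathcal{I}]{\beta}$, apply $\norm{\beta}{A^{(k)}}\leq\nu\norm{\alpha}{A^{(k)}}$ termwise in the numerator and $\norm{\beta}{A}\geq\mu^{-1}\norm{\alpha}{A}$ in the denominator, and collect constants to read off $\blockgrowth[\mathcal{I}]{\beta}\leq\mu\nu\,\blockgrowth[\mathcal{I}]{\alpha}$. Observing that the hypothesis is symmetric in $\alpha$ and $\beta$ once $\mu$ and $\nu$ are swapped (and that only their product appears in the conclusion), the matching lower bound is obtained by repeating the same calculation with the roles of the two norms exchanged.

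The hard part will be presenting the Schur-complement invariance invoked in Part~1 cleanly: it is well known but its formulation depends on carefully tracking how blocks are relabelled under the two partitions. I would either cite the standard proof based on the representation of block elimination as left-multiplication by a unit lower block-triangular matrix that factors through any compatible refinement, or else dispatch the case $|\mathcal{I}|=|\mathcal{J}|+1$ directly by inserting a single new boundary into $\mathcal{J}$ and then iterate. Everything else in both parts is a one-line manipulation once this invariance is in place.
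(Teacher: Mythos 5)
Your proposal is correct and follows essentially the same route as the paper: Part~1 rests on the observation that each Schur complement depends only on the elimination boundary (so the $\mathcal{J}$-maximum ranges over a subset of the $\mathcal{I}$-values), and Part~2 is the same termwise substitution of the norm equivalence in numerator and denominator. The paper states the boundary-invariance directly by writing $A^{(k)}$ as the Schur complement $A/A[1:j\mathord{-}1,1:j\mathord{-}1]$ without further elaboration, so no additional machinery is needed beyond what you describe.
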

\begin{proof}
	The first result follows from the observation that, because all indices in \(\mathcal{J}\) also occur in  \(\mathcal{I}\), all of the trailing matrices produced by the former must also be produced by the latter.
	In other words,
	\begin{align*}
		\blockgrowth[\mathcal{J}]{\alpha}
		&= \frac{\max_{j \in  \mathcal{J}}\norm{\alpha}{A/A[1:j\mathord{-}1,1:j\mathord{-}1]}}{\norm{\alpha}{A}}
		\leq \frac{\max_{i \in \mathcal{I}}\norm{\alpha}{A/A[1:i\mathord{-}1,1:i\mathord{-}1]}}{\norm{\alpha}{A}}
		= \blockgrowth[\mathcal{I}]{\alpha}
	\end{align*}

The second result is a straightforward substitution of norm
equivalencies some of which were alluded to in
\cref{sec:intro-norms}.
\end{proof}

However, \cref{eq:new-growth-vs-wilkinson} may be quite pessimistic for some matrices.
So, it is also useful to directly study the growth of \cref{eq:new-growth-factor}.
Applying the triangle inequality to the Schur-complement provides the following theorem which implies that, like the pointwise case, growth is caused by leading-principal submatrices with large inverses.
\begin{theorem}
	\label{thm:schur-growth-bound}
	Let \(\norm{\alpha}{\cdot}\) be a submultiplicative matrix norm.
	Then,
	\begin{align}
		\nonumber
		\blockgrowth{\alpha}
		&\leq 1+\max_{1\leq k \leq n_t-1} \left[ \norm{\alpha}{A_{1:k,1:k}^{-1}} \min\left(\norm{\alpha}{A_{k+1:n_t,1:k}}, \norm{\alpha}{A_{1:k,k+1:n_t}}\right) \right] \\
		&\leq 1+\max_{1\leq k \leq n_t-1}\, \norm{\alpha}{A_{1:k,1:k}^{-1}}\norm{\alpha}{A}.
		\label{eq:growth-bound-leading-submatrices}
	\end{align}
\end{theorem}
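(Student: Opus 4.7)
The plan is to reduce the question to a norm bound on the explicit $k$-th Schur complement and then exploit the dimension-invariant norm family properties from \cref{sec:intro-norms}, exactly as the phrase ``applying the triangle inequality to the Schur-complement'' in the statement hints.

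First I would establish by induction on $k$ that after $k$ steps of \cref{alg:block-lu}, the trailing submatrix is the $k$-th block Schur complement of $A$ with respect to its leading block,
\[
A^{(k+1)}_{k+1:n_t,\,k+1:n_t} \;=\; A_{k+1:n_t,\,k+1:n_t} \;-\; A_{k+1:n_t,\,1:k}\, A_{1:k,\,1:k}^{-1}\, A_{1:k,\,k+1:n_t}.
\]
The induction just tracks the update on \cref{alg:block_lu:schur-comp} and notes that $L_{k+1:n_t,k} R_{k,k+1:n_t} = A^{(k)}_{k+1:n_t,k}\,(A^{(k)}_{k,k})^{-1}\,A^{(k)}_{k,k+1:n_t}$ regardless of how the diagonal factorization is chosen, exactly the independence of the diagonal factorization already noted after \cref{eq:new-growth-factor}. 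Because the remaining entries of $A^{(k+1)}$ are zero, the padding-invariance built into \cref{eq:norm-families} lets me identify $\norm{\alpha}{A^{(k+1)}}$ with the norm of the right-hand side above.

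Next I would apply the triangle inequality followed by submultiplicativity of $\norm{\alpha}{\cdot}$ to get
\[
\norm{\alpha}{A^{(k+1)}} \;\leq\; \norm{\alpha}{A_{k+1:n_t,\,k+1:n_t}} \;+\; \norm{\alpha}{A_{k+1:n_t,\,1:k}}\,\norm{\alpha}{A_{1:k,\,1:k}^{-1}}\,\norm{\alpha}{A_{1:k,\,k+1:n_t}}.
\]
The submatrix-norm bound in \cref{eq:norm-families} controls $\norm{\alpha}{A_{k+1:n_t,\,k+1:n_t}}\leq\norm{\alpha}{A}$, and the same bound applied to whichever of $\norm{\alpha}{A_{k+1:n_t,1:k}}$, $\norm{\alpha}{A_{1:k,k+1:n_t}}$ one pleases yields
\[
\norm{\alpha}{A_{k+1:n_t,\,1:k}}\,\norm{\alpha}{A_{1:k,\,k+1:n_t}} \;\leq\; \norm{\alpha}{A}\,\min\!\bigl(\norm{\alpha}{A_{k+1:n_t,\,1:k}},\,\norm{\alpha}{A_{1:k,\,k+1:n_t}}\bigr).
\]
Dividing through by $\norm{\alpha}{A}$ and maximizing over $k\in\{1,\dots,n_t-1\}$ gives the first displayed inequality of the theorem; the index $k=1$ of the growth factor (with $A^{(1)}=A$) contributes only the trivial ratio $1$ and is absorbed by the constant term. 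The second inequality is immediate from the first by bounding the min by $\norm{\alpha}{A}$, once again through \cref{eq:norm-families}.

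The only place requiring care is the Schur-complement identification in the first step, but it is entirely standard; everything that follows is a routine deployment of the triangle inequality, submultiplicativity, and the absolute/submatrix properties of dimension-invariant norm families, so I do not expect any serious obstacle.
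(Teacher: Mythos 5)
Your proposal is correct and is exactly the argument the paper intends: the paper offers no written proof beyond the remark that ``applying the triangle inequality to the Schur-complement'' yields the result, and your write-up (identify \(A^{(k+1)}\) with the Schur complement \(A/A_{1:k,1:k}\), apply the triangle inequality and submultiplicativity, then control the off-diagonal and trailing blocks via \cref{eq:norm-families}) is the standard fleshing-out of that one-line justification. No gaps; the only implicit assumption, nonsingularity of the leading principal submatrices, is already built into the statement through the appearance of \(A_{1:k,1:k}^{-1}\).
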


Note that a similar bound exists for the max-norm, except with an extra factor of \(k\) to account for the lack of submultiplicativity.
These types of bounds provide a valuable heuristic into the behavior of the growth factor, but have received limited attention in the literature to date.

\subsection{Backward Error Bounds}
\label{sec:block-lu:blockwise}

We begin by analyzing the backward error of the factorization itself.
For the sake of flexibility, we parameterize the error of the blockwise operations.
The resulting coefficients, \(c_{11}\) et al., are moderate polynomials
in \(n\) or \(n_b\), often denoted as either \(p(n)\) or \(p(n_b)\),
respectively, when the individual block operations are numerically stable,
and as such, will generally not be considered in detail any further.
Note that this theorem specifically applies to implementations where the Schur-complements are performed in order.

\begin{theorem}
	\label{thm:block-lu-submult}
	For a given norm, \(\norm{\alpha}{\cdot}\), 
	apply \cref{alg:block-lu} such that for all \(1\leq k\leq n_t\),
	\begin{align}
		\label{eq:block-lu:assumptions:11}
		A_{kk}^{(k)} & = \widehat{L}_{kk}\widehat{R}_{kk} + E_{11}^{(k)}, & \norm{\alpha}{E_{11}^{(k)}} &\leq c_{11}^{(k)}u\norm{\alpha}{A_{kk}^{(k)}}, \\
		\label{eq:block-lu:assumptions:12}
		A_{k\mathcal{T}_k}^{(k)} &= \widehat{L}_{kk}\widehat{R}_{k\mathcal{T}_k} + E_{12}^{(k)}, & \norm{\alpha}{E_{12}^{(k)}} &\leq c_{12}^{(k)}u\norm{\alpha}{\widehat{L}_{kk}}\norm{\alpha}{\widehat{R}_{k\mathcal{T}_k}}, \\
		\label{eq:block-lu:assumptions:21}
		A_{\mathcal{T}_kk}^{(k)} &= \widehat{L}_{\mathcal{T}_kk}\widehat{R}_{kk} + E_{21}^{(k)}, & \norm{\alpha}{E_{21}^{(k)}} &\leq c_{21}^{(k)}u\norm{\alpha}{\widehat{L}_{\mathcal{T}_kk}}\norm{\alpha}{\widehat{R}_{kk}}, \\
		\label{eq:block-lu:assumptions:22}
		A_{\mathcal{T}_k\mathcal{T}_k}^{(k+1)} &= A_{\mathcal{T}_k\mathcal{T}_k}^{(k)} - \widehat{L}_{\mathcal{T}_kk}\widehat{R}_{k\mathcal{T}_k} + E_{22}^{(k)}, & \norm{\alpha}{E_{22}^{(k)}} &\leq c_{22A}^{(k)}u\norm{\alpha}{A_{\mathcal{T}_k\mathcal{T}_k}^{(k)}} \\
		& & & \phantom{\leq} + c_{22LU}^{(k)}u\norm{\alpha}{\widehat{L}_{\mathcal{T}_kk}}\norm{\alpha}{\widehat{R}_{k\mathcal{T}_k}},\nonumber
	\end{align}
	where \(\mathcal{T}_k = k+1:n_t\) are the trailing blocks.
	Then, the computed factorization satisfies
	\[
		\norm{\alpha}{A - \widehat{L}\widehat{R}}
		\leq C_{A}u\blockgrowth{\alpha}\norm{\alpha}{A} + C_{LU}u\norm{\alpha}{\widehat{L}}\norm{\alpha}{\widehat{R}}
	\]
        where the \(C_{*}\) constants are bound by the sums of the
        errors of the block operations:
	\[
		\label{eq:block-lu:coeff-bounds}
		C_A \leq \sum_{k=1}^{n_t}c_{11}^{(k)} + \sum_{k=1}^{n_t-1}c_{22A}^{(k)}
		\qquad\text{and}\qquad
		C_{LU} \leq \sum_{k=1}^{n_t-1} \left(c_{21}^{(k)} + c_{12}^{(k)} + c_{22LU}^{(k)}\right).
	\]
\end{theorem}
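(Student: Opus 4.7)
My plan is to derive an exact block-wise decomposition of $A - \widehat{L}\widehat{R}$ that cleanly separates the error contributed at step $k$, and then bound the pieces using the norm-family machinery of \cref{sec:intro-norms}. No submultiplicativity is actually required; only the properties of dimension-invariant norm families are used.

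First, I fix a pair of block coordinates $(i,j)$ and unroll the Schur-complement recurrence \cref{eq:block-lu:assumptions:22}. Because $A^{(k+1)}_{ij} = A^{(k)}_{ij} - \widehat{L}_{ik}\widehat{R}_{kj} + (E_{22}^{(k)})_{ij}$ for every $k<\min(i,j)$, telescoping from $k=1$ down to step $\min(i,j)$ and then substituting the factorization identity at that step gives three cases: \cref{eq:block-lu:assumptions:11} for $i=j$, \cref{eq:block-lu:assumptions:12} for $i<j$, and \cref{eq:block-lu:assumptions:21} for $i>j$. In each case, the terms involving $\widehat{L}_{ik}\widehat{R}_{kj}$ assemble into $(\widehat{L}\widehat{R})_{ij}$, leaving a remainder that involves at most one of $E_{11}^{(\min(i,j))}$, $E_{12}^{(i)}$, or $E_{21}^{(j)}$ together with a sum of $(E_{22}^{(k)})_{ij}$ terms. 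Regrouping by the step that produced each contribution rewrites
\[
A - \widehat{L}\widehat{R} \;=\; \sum_{k=1}^{n_t} F^{(k)},
\]
where $F^{(k)}$ embeds $E_{11}^{(k)}$ into the $(k,k)$ block, $E_{12}^{(k)}$ into the $(k,\mathcal{T}_k)$ block row, $E_{21}^{(k)}$ into the $(\mathcal{T}_k,k)$ block column, $-E_{22}^{(k)}$ into the $(\mathcal{T}_k,\mathcal{T}_k)$ trailing submatrix, and zeros elsewhere. Verifying this decomposition block-by-block is the single bookkeeping identity that drives the rest of the proof.

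Second, by the triangle inequality together with the zero-padding invariance built into a dimension-invariant norm family (\cref{eq:norm-families}),
\[
\norm{\alpha}{A-\widehat{L}\widehat{R}} \;\leq\; \sum_{k=1}^{n_t}\left(\norm{\alpha}{E_{11}^{(k)}}+\norm{\alpha}{E_{12}^{(k)}}+\norm{\alpha}{E_{21}^{(k)}}+\norm{\alpha}{E_{22}^{(k)}}\right),
\]
where the last three summands are absent for $k=n_t$. Substituting the hypotheses \cref{eq:block-lu:assumptions:11}--\cref{eq:block-lu:assumptions:22}, every norm of a subblock of $A^{(k)}$, $\widehat{L}$, or $\widehat{R}$ on the right is majorized by the norm of the full matrix via \cref{eq:norm-families}. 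The definition of $\blockgrowth{\alpha}$ in \cref{eq:new-growth-factor} then replaces each $\norm{\alpha}{A^{(k)}}$ by $\blockgrowth{\alpha}\norm{\alpha}{A}$. Sorting the resulting coefficients according to whether they multiply $\norm{\alpha}{A}$ or $\norm{\alpha}{\widehat{L}}\norm{\alpha}{\widehat{R}}$, and using that $E_{11}^{(k)}$ occurs for $k=1{:}n_t$ while the remaining three errors occur only for $k=1{:}n_t-1$, yields exactly the claimed bounds on $C_A$ and $C_{LU}$.

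The main obstacle is the telescoping step: I must carefully track the three cases $i=j$, $i<j$, $i>j$ so that summation ranges and signs agree, and verify that the $F^{(k)}$ decomposition actually reproduces every block of $A-\widehat{L}\widehat{R}$. Once that identity is in place, the remaining estimates are a mechanical application of the norm-family properties of \cref{sec:intro-norms} and the definition of the blocked growth factor.
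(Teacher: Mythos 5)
Your proof is correct and follows essentially the same route as the paper's: the paper peels off the first block row/column and recurses on the trailing submatrix, which when unrolled is exactly your telescoped decomposition $A-\widehat{L}\widehat{R}=\sum_k F^{(k)}$, followed by the same triangle-inequality, norm-family, and growth-factor substitutions. The only difference is bookkeeping style (explicit blockwise telescoping versus recursion), and your observation that no submultiplicativity is needed is consistent with the paper's hypotheses.
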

\begin{proof}
	When \(A\) is a single block, \cref{eq:block-lu:assumptions:11} gives \(\norm{\alpha}{E} \leq c_{11}u\blockgrowth{\alpha}\norm{\alpha}{A}\).
	So, assume there are multiple blocks.
	Writing out the blocks of the product \(A - \widehat{L} \widehat{R}\) gives
	\begin{align*}
		\norm{\alpha}{A - \widehat{L}\widehat{R}}
		&\leq \norm{\alpha}{E_{11}^{(1)}}+\norm{\alpha}{E_{12}^{(1)}}+\norm{\alpha}{E_{21}^{(1)}}+\norm{\alpha}{E_{22}^{(1)}} + \norm{\alpha}{A^{(2)} - \widehat{L}_{\mathcal{T}_1\mathcal{T}_1}\widehat{R}_{\mathcal{T}_1\mathcal{T}_1}} \\
		&\leq (c_{11}^{(1)} + c_{22A}^{(1)})u\blockgrowth{\alpha}\norm{\alpha}{A}
		+ (c_{12}^{(1)} + c_{21}^{(1)} + c_{22LU}^{(1)})u\norm{\alpha}{\widehat{L}}\norm{\alpha}{\widehat{R}} \\
		&\phantom{\leq} + \norm{\alpha}{A^{(2)} - \widehat{L}_{\mathcal{T}_1\mathcal{T}_1}\widehat{R}_{\mathcal{T}_1\mathcal{T}_1}}.
	\end{align*}
	Continuing the iteration gives
	\begin{align*}
		C_A &\leq \sum_{k=1}^{n_t}c_{11}^{(k)} + \sum_{k=1}^{n_t-1}c_{22A}^{(k)}
		&&\text{and} &
		C_{LU} &\leq \sum_{k=1}^{n_t-1} (c_{12}^{(k)} + c_{21}^{(k)} + c_{22LU}^{(k)}).
	\end{align*}
\end{proof}

Next, we similarly bound the error for block-triangular solves.
To ensure the bound has the correct structure for any submultiplicative norm, \(\norm{\alpha}{\cdot}\), we introduce the quantity \(\mu_\alpha\).
This quantity is the smallest value such that for any \(\vec{u},\vec{v}\in\mathbb{R}^n\) there is a matrix, \(M\), such that \(Mv = u\) and \(\norm{\alpha}{M} \norm{\alpha}{v} \leq \mu_\alpha\norm{\alpha}{u}\).
This value is trivially 1 for Schatten norms (e.g., the spectral and Frobenius norms).
Furthermore, one can show that this quantity is 1 for the elementwise max norm and for operator norms induced by a \(\ell_p\) vector norm.
This quantity it always at least 1; 
however, for some norms, it can be larger than 1.
For example, the elementwise 1-norm, \(\norm{\mathrm{sum}}{\cdot}\), which equals the sum of element magnitudes and is both absolute and submultiplicative has \(\mu_{\mathrm{sum}} = n\).


\begin{theorem}
	\label{thm:block-lu:forward-sub}
	Let \(\norm{\alpha}{\cdot}\) be a submultiplicative norm and let
	\[
		\mu_\alpha =
		\max_{\substack{u,v\in\mathbb{R}^n\\v\neq 0}}
		\min_{\substack{M\in\mathbb{R}^{n\times n} \\ u = Mv}}
		\frac{\norm{\alpha}{M}\norm{\alpha}{v}}{\norm{\alpha}{Mv}}
	\]
	bound the norm of a mapping between an arbitrary pair of vectors.
	Assume that \(\widehat{L}\) is a lower block-triangular matrix, the right-hand side \(b\) is nonzero, and that
	\begin{alignat}{2}
		\label{eq:block-tri-solve:assumptions:1}
		(\widehat{L}_{kk} + E_{L1}^{(k)})\widehat{y}_{k} &= b^{(k)}_k
		& \norm{\alpha}{E_{L1}^{(k)}} &\leq c_{L1}^{(k)}u\norm{\alpha}{\widehat{L}_{kk}} \\
		\label{eq:block-tri-solve:assumptions:2}
		b^{(k+1)}_{\mathcal{T}_k} + E_{L2}^{(k)} &= b^{(k)}_{\mathcal{T}_k} - \widehat{L}_{\mathcal{T}_kk}\widehat{y}_k
		&\hspace{0.95em} \norm{\alpha}{E_{L2}^{(k)}} &\leq c_{L2}^{(k)}u(\norm{\alpha}{b^{(k)}_{\mathcal{T}_k}} + \norm{\alpha}{\widehat{L}_{\mathcal{T}_kk}}\norm{\alpha}{\widehat{y}_k})
	\end{alignat}
	with \(\mathcal{T}_k=k+1:n_t\) being the indices of the trailing blocks.
	Then, there exists a matrix, \(E\), such that
	\[
		(\widehat{L} + E)\widehat{y} = b
		\qquad\text{where}\qquad
		\norm{\alpha}{E} \leq \mu_\alpha\left(\sum_{k=1}^{n_t}c_{L1}^{(k)} + 2\sum_{k=1}^{n_t-1}c_{L2}^{(k)}\right)u \norm{\alpha}{\widehat{L}} + \Oh{u^2}.
	\]
	An analogous bound holds for upper block-triangular matrices.
	Furthermore, for a non-submultiplicative norm, a similar bound holds, except with an additional factor of \(n\).
\end{theorem}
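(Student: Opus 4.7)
\emph{The plan.} Rather than construct $E$ blockwise, I would characterize it globally through the residual $r = b - \widehat{L}\widehat{y}$. If one establishes
\[
    \norm{\alpha}{r} \;\leq\; \Bigl(\sum_{k=1}^{n_t} c_{L1}^{(k)} + 2\sum_{k=1}^{n_t-1} c_{L2}^{(k)}\Bigr) u\,\norm{\alpha}{\widehat{L}}\,\norm{\alpha}{\widehat{y}} + \Oh{u^2},
\]
then the defining property of $\mu_\alpha$, applied with $r$ in place of $u$ and $\widehat{y}$ in place of $v$, yields an $E$ with $E\widehat{y} = r$ and $\norm{\alpha}{E}\norm{\alpha}{\widehat{y}} \leq \mu_\alpha \norm{\alpha}{r}$. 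Rearranging delivers $(\widehat{L}+E)\widehat{y} = b$, and dividing through by $\norm{\alpha}{\widehat{y}}$ gives the advertised bound on $\norm{\alpha}{E}$. The hypothesis $b\neq 0$ forces $\widehat{y}\neq 0$ to leading order, so the quotient defining $\mu_\alpha$ is meaningful; this clean separation of the numerical bookkeeping from the norm-theoretic obstruction is precisely what $\mu_\alpha$ was introduced for.

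\emph{Step 1: telescope to compute $r$.} Chaining \cref{eq:block-tri-solve:assumptions:2} successively from $k=1$ through $k=j-1$ and closing with \cref{eq:block-tri-solve:assumptions:1} at $k=j$ gives, for each block index $j$,
\[
    b_j \;=\; \sum_{k\leq j}\widehat{L}_{jk}\widehat{y}_k \;+\; E_{L1}^{(j)}\widehat{y}_j \;+\; \sum_{k<j}\bigl[E_{L2}^{(k)}\bigr]_j,
\]
where $\bigl[E_{L2}^{(k)}\bigr]_j$ denotes the $j$th block row of $E_{L2}^{(k)}$. Lower block-triangularity identifies the first sum with $(\widehat{L}\widehat{y})_j$, so $r_j = E_{L1}^{(j)}\widehat{y}_j + \sum_{k<j}\bigl[E_{L2}^{(k)}\bigr]_j$. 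Grouping contributions by their source index $k$ rather than by target row block and using that padding by zeros is norm-preserving in a dimension-invariant norm family, the triangle inequality produces
\[
    \norm{\alpha}{r} \;\leq\; \sum_j \norm{\alpha}{E_{L1}^{(j)}\widehat{y}_j} \;+\; \sum_k \norm{\alpha}{E_{L2}^{(k)}},
\]
without an extraneous factor of $n_t$.

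\emph{Step 2: bound the two sums.} Submultiplicativity together with \cref{eq:block-tri-solve:assumptions:1} handles the first sum immediately: it is at most $(\sum_j c_{L1}^{(j)})\,u\,\norm{\alpha}{\widehat{L}}\,\norm{\alpha}{\widehat{y}}$. The $E_{L2}^{(k)}$ terms are the main obstacle because \cref{eq:block-tri-solve:assumptions:2} involves the intermediate quantity $\norm{\alpha}{b^{(k)}_{\mathcal{T}_k}}$, which is not part of the target bound. My resolution is to reuse the Step~1 telescoping: restricted to $j\in\mathcal{T}_k$ it gives $b^{(k)}_j = \sum_{l=k}^{n_t}\widehat{L}_{jl}\widehat{y}_l$ plus error terms of size $\Oh{u}\,\norm{\alpha}{\widehat{L}}\,\norm{\alpha}{\widehat{y}}$, so submultiplicativity yields $\norm{\alpha}{b^{(k)}_{\mathcal{T}_k}} \leq \norm{\alpha}{\widehat{L}}\,\norm{\alpha}{\widehat{y}} + \Oh{u}\,\norm{\alpha}{\widehat{L}}\,\norm{\alpha}{\widehat{y}}$. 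Combined with $\norm{\alpha}{\widehat{L}_{\mathcal{T}_k k}}\,\norm{\alpha}{\widehat{y}_k}\leq\norm{\alpha}{\widehat{L}}\,\norm{\alpha}{\widehat{y}}$, \cref{eq:block-tri-solve:assumptions:2} then bounds each $\norm{\alpha}{E_{L2}^{(k)}}$ by $2 c_{L2}^{(k)}\,u\,\norm{\alpha}{\widehat{L}}\,\norm{\alpha}{\widehat{y}} + \Oh{u^2}$; the factor $2$ records that each of the two additive pieces of the hypothesis contributes one copy of $\norm{\alpha}{\widehat{L}}\,\norm{\alpha}{\widehat{y}}$. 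Summing the two displayed contributions and invoking the $\mu_\alpha$ argument from the plan closes the submultiplicative case. The upper block-triangular statement follows by reversing the block ordering, and the non-submultiplicative version is obtained by replacing each use of $\norm{\alpha}{AB}\leq\norm{\alpha}{A}\,\norm{\alpha}{B}$ above with its $n$-factor surrogate (e.g., $\norm{\maxrm}{AB}\leq n\norm{\maxrm}{A}\,\norm{\maxrm}{B}$), introducing the single extra factor of $n$ claimed in the theorem.
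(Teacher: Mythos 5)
Your proposal is correct and follows essentially the same route as the paper: both bound the residual $b-\widehat{L}\widehat{y}$ by accumulating the blockwise errors (you telescope explicitly and regroup by source index; the paper peels off the first block and recurses on the trailing part), both absorb the intermediate quantity $\norm{\alpha}{b^{(k)}_{\mathcal{T}_k}}$ back into $\norm{\alpha}{\widehat{L}}\norm{\alpha}{\widehat{y}}$ to produce the factor of $2$ on the $c_{L2}^{(k)}$ terms, and both then invoke the definition of $\mu_\alpha$ to convert the residual bound into the backward error matrix $E$. The organizational difference is cosmetic, so no further comparison is needed.
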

\begin{proof}
Writing out the blocks of the product \(b - \widehat{L} \widehat{y}\) gives
\begin{align}
	\norm{\alpha}{b - \widehat{L}\widehat{y}}
	\nonumber &\leq \norm{\alpha}{E^{(1)}_{1}}\norm{\alpha}{\widehat{y}_1} + \norm{\alpha}{E_{L2}^{(1)}} + \norm{\alpha}{b^{(2)}_{\mathcal{T}_1}-\widehat{L}_{\mathcal{T}_1\mathcal{T}_1}\widehat{y}_{\mathcal{T}_1}}
	\\
	\label{eq:block-tri-solve:expanded}
	&\leq c_{L1}^{(k)}u\norm{\alpha}{\widehat{L}_{11}}\norm{\alpha}{\widehat{y}_1} + c_{L2}^{(k)}u(\norm{\alpha}{b^{(2)}_{\mathcal{T}_1}} + \norm{\alpha}{\widehat{L}_{\mathcal{T}_11}}\norm{\alpha}{\widehat{y}_k}) \\
	\nonumber &\phantom{\leq=}+ \norm{\alpha}{b^{(2)}_{\mathcal{T}_1}-\widehat{L}_{\mathcal{T}_1\mathcal{T}_1}\widehat{y}_{\mathcal{T}_1}}.
\end{align}
Using
\(
	\norm{\alpha}{b^{(2)}_{\mathcal{T}_1}}
	\leq \norm{\alpha}{b^{(2)}_{\mathcal{T}_1}-\widehat{L}_{\mathcal{T}_1\mathcal{T}_1}\widehat{y}_{\mathcal{T}_1}} + \norm{\alpha}{\widehat{L}_{\mathcal{T}_1\mathcal{T}_1}\widehat{y}_{\mathcal{T}_1}}
\),
we can simplify \cref{eq:block-tri-solve:expanded} to
\[
	\norm{\alpha}{b - \widehat{L}\widehat{y}}
	\leq (c_{L1}^{(k)} + 2c_{L2}^{(k)})u\norm{\alpha}{\widehat{L}}\norm{\alpha}{\widehat{y}} +  (1+c_{L2}^{(k)}u)\norm{\alpha}{b^{(2)}_{\mathcal{T}_1}-\widehat{L}_{22}\widehat{y}_{\mathcal{T}_1}}.
\]
Continuing the iteration gives
\[
	\norm{\alpha}{b - \widehat{L}\widehat{y}}
	\leq \left(\sum_{k=1}^{n_t}c_{L1}^{(k)} + 2\sum_{k=1}^{n_t-1}c_{L2}^{(k)}\right)u \norm{\alpha}{\widehat{L}}\norm{\alpha}{\widehat{y}} + \Oh{u^2}.
\]
By the definition of \(\mu_\alpha\),
there exists an error matrix \(E\) such that
\(E\widehat{y} = b - \widehat{L}\widehat{y}\)
and
\(\norm{\alpha}{E}\norm{\alpha}{\widehat{y}} \leq \mu_\alpha\norm{\alpha}{b - \widehat{L}\widehat{y}}\).
Therefore, \((\widehat{L} + E)\widehat{y} = b\) and
\begin{align*}
	\norm{\alpha}{E} &\leq \mu_\alpha\left(\sum_{k=1}^{n_t-1}c_{L1}^{(k)} + 2\sum_{k=1}^{n_t}c_{L2}^{(k)}\right)u \norm{\alpha}{\widehat{L}} + \Oh{u^2}.
\end{align*}

Note that the separation of \(\norm{\alpha}{E_1^{(1)}\widehat{y}}\) and of \(\norm{\alpha}{\widehat{L}_{\mathcal{T}_1\mathcal{T}_1}\widehat{y}_{\mathcal{T}_1}}\) in \cref{eq:block-tri-solve:expanded} and the subsequent simplification are the only places where submultiplicativity is used.
Thus, for a non-submultiplicative norm, there is an extra factor of \(n\) for the \(c_{L1}^{(k)}\) and \(c_{L2}^{(k)}\) terms, respectively, resulting in an overall increase by a factor of \(n\).
\end{proof}

\Cref{thm:block-lu-submult,thm:block-lu:forward-sub} can be combined to bound the backward error of a full, blockwise solve.
\begin{corollary}
	\label{thm:block-lu-solve}
	Let \(\norm{\alpha}{\cdot}\) be a submultiplicative norm
	Apply \Cref{alg:block-lu} with the assumptions of \cref{thm:block-lu-submult} and \cref{thm:block-lu:forward-sub}.
	Then, there exists a matrix \(E\) such that \((A+E)\widehat{x} = b\) with
	\begin{align*}
		\norm{\alpha}{E} &\leq C_{A}u\blockgrowth{\alpha}\norm{\alpha}{A} + C_{LU}u\norm{\alpha}{\widehat{L}}\norm{\alpha}{\widehat{R}}, \\
		C_A &\leq \sum_{k=1}^{n_t}c_{11}^{(k)} + \sum_{k=1}^{n_t-1}c_{22A}^{(k)},
		\hspace{18.5em}\text{and} \\
		C_{LU} &\leq \mu_\alpha\sum_{k=1}^{n_t}\left[c_{L1}^{(k)} + c_{R1}^{(k)}\right]
		+ \sum_{k=1}^{n_t-1}\left[c_{21}^{(k)} + c_{12}^{(k)} + c_{22LU}^{(k)} + 2\mu_\alpha\left(c_{L2}^{(k)} + c_{R2}^{(k)}\right)\right].
	\end{align*}
	As per \cref{thm:block-lu:forward-sub}, if the norm is instead not submultiplicative, the \(\mu_\alpha\) terms increase by a factor of \(n\).
\end{corollary}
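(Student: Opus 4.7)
The plan is to chain the factorization bound from \cref{thm:block-lu-submult} with two applications of \cref{thm:block-lu:forward-sub}: one for the forward block-triangular solve \(\widehat{L}\widehat{y}=b\), and one (via the analogous upper-triangular form) for the back solve \(\widehat{R}\widehat{x}=\widehat{y}\). These yield \(A=\widehat{L}\widehat{R}+E_F\), \((\widehat{L}+E_L)\widehat{y}=b\), and \((\widehat{R}+E_R)\widehat{x}=\widehat{y}\), with norm bounds on \(E_F\), \(E_L\), and \(E_R\) already in hand from the prior results.

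Composing the two solves produces \((\widehat{L}+E_L)(\widehat{R}+E_R)\widehat{x}=b\); substituting \(\widehat{L}\widehat{R}=A-E_F\) shows that the matrix \(E = -E_F + E_L\widehat{R} + \widehat{L}E_R + E_LE_R\) satisfies \((A+E)\widehat{x}=b\). Applying the triangle inequality and submultiplicativity,
\[
	\norm{\alpha}{E} \;\leq\; \norm{\alpha}{E_F} + \norm{\alpha}{E_L}\norm{\alpha}{\widehat{R}} + \norm{\alpha}{\widehat{L}}\norm{\alpha}{E_R} + \norm{\alpha}{E_L}\norm{\alpha}{E_R},
\]
and the final cross term is \(O(u^2)\) since both solve bounds are already \(O(u)\) with an \(O(u^2)\) remainder, so it can be absorbed. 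Substituting the per-result bounds, the contribution of \(\norm{\alpha}{E_F}\) donates the \(C_A u\blockgrowth{\alpha}\norm{\alpha}{A}\) piece together with the factorization half of \(C_{LU}\) (the \(c_{12}^{(k)}, c_{21}^{(k)}, c_{22LU}^{(k)}\) sums). The two solve contributions each carry a \(\mu_\alpha\) multiplier and, after absorbing \(\norm{\alpha}{\widehat{R}}\) or \(\norm{\alpha}{\widehat{L}}\) into the product \(\norm{\alpha}{\widehat{L}}\norm{\alpha}{\widehat{R}}\), supply the \(\mu_\alpha(c_{L1}^{(k)}+c_{R1}^{(k)})\) and \(2\mu_\alpha(c_{L2}^{(k)}+c_{R2}^{(k)})\) sums. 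Collecting these matches the stated \(C_A\) and \(C_{LU}\) exactly.

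There is no conceptual obstacle beyond bookkeeping, but two details warrant care. First, one must confirm that the cross term \(\norm{\alpha}{E_L}\norm{\alpha}{E_R}\) is genuinely second-order so that dropping it is consistent with the statement's first-order accounting; this is immediate from the \(O(u^2)\) remainders already attached to the solve bounds. Second, in the non-submultiplicative case the extra factor of \(n\) appears only on the \(\mu_\alpha\) terms, because this factor is inherited verbatim from the final clause of \cref{thm:block-lu:forward-sub}; the factorization contributions from \cref{thm:block-lu-submult} need no such adjustment, since that result is framed directly in terms of the block-operation norm bounds and does not itself rely on submultiplicativity of \(\norm{\alpha}{\cdot}\).
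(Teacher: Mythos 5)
Your proof is correct and is exactly the argument the paper intends: the corollary is stated without an explicit proof, only the remark that \cref{thm:block-lu-submult,thm:block-lu:forward-sub} ``can be combined,'' and your composition \((\widehat{L}+E_L)(\widehat{R}+E_R)\widehat{x}=b\) with \(\widehat{L}\widehat{R}=A-E_F\), dropping the \(\Oh{u^2}\) cross term, is that combination carried out. One minor caveat: in the non-submultiplicative case the products \(E_L\widehat{R}\) and \(\widehat{L}E_R\) themselves also cost an extra factor (beyond what is inherited from the final clause of \cref{thm:block-lu:forward-sub}), so your closing remark slightly understates where the factor of \(n\) must be accounted for, though this does not affect the main submultiplicative bound.
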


These theorems raise the question whether \(\norm{\alpha}{\widehat{L}}\norm{\alpha}{\widehat{R}}\) can be bounded in terms of \(\norm{\alpha}{A}\).
We focus on bounding the norms of the exactly-computed factors \(L\) and \(R\) since, even if \(\widehat{L}\) and \(\widehat{R}\) only have 1 bit of accuracy, they still satisfy \(\norm{\alpha}{\widehat{L}}\norm{\alpha}{\widehat{R}} < 4\norm{\alpha}{L}\norm{\alpha}{R}\).
Meaningfully bounding this quantity in complete generality is difficult,
so we focus on the case where a Schatten norm is used and the diagonal
blocks of \(L\) are unitary, as is the case for \ac{beam}.
Analogous bounds exist whenever the norm is invariant to the diagonal blocks of \(L\), for example the 1- or \(\infty\) operator norms with permutation matrices.
Note that, in addition to expanding its scope, this bound improves the general bound of Demmel et al.~\cite[pg.~182]{demmelStabilityBlockLU1995}, replacing the pointwise growth factor cubed with the blockwise growth factor squared.

\begin{theorem}
	\label{thm:block-lu:LR-bounds}
	Let \(A=LR\) be a block LU factorization where the diagonal blocks of \(L\) are unitary, and let \(\norm{p}{\cdot}\) be a Schatten \(p\)-norm.
	Then
	\[
		\norm{p}{L} \leq n_b^{1/p}n_t + n_t\blockgrowth{p}\cond{p}{A}
		\qquad\text{and}\qquad
		\norm{p}{R} \leq n_t\blockgrowth{p}\norm{p}{A}
	\]
	where \(n_b\) is the largest block size and for the spectral norm \(n_b^{1/p} = 1\).
	Thus,
	\[
		\norm{p}{L}\norm{p}{R} \leq n_t^2\blockgrowth[\,2]{p}\cond{p}{A}\norm{p}{A} + n_b^{1/p}n_t^2\blockgrowth{p}\norm{p}{A}.
	\]
	By symmetry, an analogous result holds if the diagonal blocks of \(R\) are unitary instead.
\end{theorem}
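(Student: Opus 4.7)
The plan is to prove the two factor-norm bounds separately---one via block-row decomposition and one via block-column decomposition---using only Schatten-norm unitary invariance, submultiplicativity, and the submatrix inequality $\norm{p}{X_{\mathcal{I},\mathcal{J}}} \leq \norm{p}{X}$ implied by the dimension-invariant family property. The product bound then follows by straightforward multiplication.

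For $\norm{p}{R}$, I would expand $\norm{p}{R} \leq \sum_{k=1}^{n_t}\norm{p}{R_{k, k:n_t}}$ along block rows (the empty blocks left of the diagonal contribute nothing under a Schatten norm). The algorithmic identity $R_{k, k:n_t} = L_{k,k}^{-1} A^{(k)}_{k, k:n_t}$, together with unitarity of $L_{k,k}^{-1} = L_{k,k}^*$, lets unitary invariance erase the $L_{k,k}^{-1}$ factor, yielding $\norm{p}{R_{k, k:n_t}} = \norm{p}{A^{(k)}_{k, k:n_t}} \leq \norm{p}{A^{(k)}} \leq \blockgrowth{p}\norm{p}{A}$ from the growth factor definition. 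Summing over $n_t$ block rows closes this half.

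For $\norm{p}{L}$, I would expand by block columns: $\norm{p}{L} \leq \sum_{k=1}^{n_t}\bigl(\norm{p}{L_{k,k}} + \norm{p}{L_{k+1:n_t, k}}\bigr)$. Each $L_{k,k}$ is $n_b \times n_b$ unitary, so $\norm{p}{L_{k,k}} = n_b^{1/p}$, accounting for the $n_t n_b^{1/p}$ term. The central step is bounding the subdiagonal blocks. The naive identity $L_{k+1:n_t, k} = A^{(k)}_{k+1:n_t, k} R_{k,k}^{-1}$ is a dead end, since it forces control of $\norm{p}{(A^{(k)}_{k,k})^{-1}}$, which cannot be bounded by $\norm{p}{A^{-1}}$ in general---a small principal submatrix of a well-conditioned matrix can be arbitrarily ill-conditioned. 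Instead, I would work from $R^{-1} = A^{-1} L$: the $(i,k)$ block of $R^{-1}$ vanishes for $i > k$, so expanding $(A^{-1} L)_{i,k} = \sum_{j \geq k}(A^{-1})_{i,j} L_{j,k}$ and isolating the $j = k$ term yields the identity
\[
L_{k+1:n_t,k} = -\bigl((A^{-1})_{k+1:n_t,\,k+1:n_t}\bigr)^{-1}(A^{-1})_{k+1:n_t,\,k}\, L_{k,k}.
\]
Unitary invariance removes $L_{k,k}$, the Schur-complement-of-the-inverse identity gives $\bigl((A^{-1})_{k+1:n_t,\,k+1:n_t}\bigr)^{-1} = A^{(k+1)}_{k+1:n_t,\,k+1:n_t}$ whose norm is at most $\blockgrowth{p}\norm{p}{A}$, and the submatrix inequality supplies $\norm{p}{(A^{-1})_{k+1:n_t,\,k}} \leq \norm{p}{A^{-1}}$. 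Submultiplicativity combines these into $\norm{p}{L_{k+1:n_t,k}} \leq \blockgrowth{p}\cond{p}{A}$, and summing over the $n_t$ block columns produces the $n_t\blockgrowth{p}\cond{p}{A}$ term.

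The main obstacle is recognizing that the correct route from a subdiagonal block of $L$ to $A^{-1}$ runs through $R^{-1} = A^{-1} L$ rather than through the leading Schur complement of $A_{1:k,1:k}$; only the former exposes a trailing Schur complement in the middle, which is exactly the quantity the growth factor already bounds. Once both factor bounds are established, the product bound is routine multiplication; the spectral-norm case recovers $n_b^{1/p} = 1$ because every unitary matrix has unit spectral norm. The symmetric statement in which $R$ instead has unitary diagonal blocks follows by applying the same argument to $A^*$.
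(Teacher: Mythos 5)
Your proposal is correct and follows essentially the same route as the paper: bound $\norm{p}{R}$ block-row-wise via unitary invariance and the growth factor, and bound the subdiagonal blocks of $L$ by expressing $A^{(k)}_{k+1:n_t,k}\bigl(A^{(k)}_{k,k}\bigr)^{-1}$ as the trailing Schur complement times a block of $A^{-1}$. The only difference is cosmetic---you derive that key identity from the block-triangularity of $R^{-1}=A^{-1}L$, while the paper invokes the blockwise-inversion formula $-\bigl(A^{(k+1)}\bigr)^{-1}A^{(k)}_{k+1:n_t,k}\bigl(A^{(k)}_{k,k}\bigr)^{-1}=\bigl((A^{(k)})^{-1}\bigr)_{k+1:n_t,k}$ directly; the resulting bounds are identical (and the paper does in fact start from the identity you call a ``dead end,'' then applies this same rescue).
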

\begin{proof}
First, for the \(k\)th block row of \(R\),
\[
	\norm{p}{R_{k,k:n_t}}
	= \norm{p}{L_{kk}^{-1}A_{k,k:n_t}^{(k)}}
	\leq \blockgrowth{p}\norm{p}{A}
\]
Then,
\[
	\norm{p}{R}
	\leq \sum_{k=1}^{n_t} \norm{p}{R_{k,k:n_t}}
	\leq n_t\blockgrowth{p}\norm{p}{A}.
\]
Furthermore, for the \(k\)th column of \(L\),
\begin{equation}
	\label{eq:LR-bounds:L-col}
	\norm{p}{L_{k+1:n_t,k}}
	= \norm{p}{A_{k+1:n_t,k}^{(k)}R_{kk}^{-1}}
	= \norm{p}{A_{k+1:n_t,k}^{(k)}(A_{kk}^{(k)})^{-1}}.
\end{equation}
Blockwise inversion shows that \(-(A^{(k+1)})^{-1}A^{(k)}_{k+1:n_t,k}(A^{(k)}_{kk})^{-1} = ((A^{(k)})^{-1})_{k+1:n_t,k}\) \cite[p.~182]{demmelStabilityBlockLU1995}.
Thus,
\[
	\norm{p}{L}
	\leq \sum_{k=1}^{n_t} \norm{p}{L_{k,k}} + \sum_{k=1}^{n_t} \norm{p}{L_{k+1:n_t,k}}
	\leq n_b^{1/p}n_t + n_t\blockgrowth{p}\cond{p}{A}.
\]
\end{proof}

Note that \(\norm{}{A_{k+1:n_t,k}^{(k)}(A^{(k)}_{kk})^{-1}}\) is the key quantity to bound \(\norm{p}{L}\).
However, \ac{beam} is designed specifically to prevent \((A^{(k)}_{kk})^{-1}\) from growing catastrophically large, which provides for an additional bound on \(\norm{p}{L}\) for ill-conditioned matrices.

\begin{theorem}
	Let \(A=LR\) be a block LU factorization where the diagonal blocks of \(L\) are unitary.
	Assume that the smallest singular value of \(A^{(k)}_{kk}\) is at least \(\hat{\tau}\norm{2}{A}\) (as is ensured by \ac{beam}).
	Then,
	\[
		\norm{2}{L} \leq n_t + n_t\blockgrowth{2}\hat{\tau}^{-1}.
	\]
\end{theorem}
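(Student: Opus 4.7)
The plan is to mirror the proof of \cref{thm:block-lu:LR-bounds}, but use the assumed lower bound on \(\sigma_{\min}(A^{(k)}_{kk})\) in place of the generic condition-number estimate to control the subdiagonal blocks of \(L\).

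First, I decompose \(L\) by block columns. As in the previous proof, the norm-family properties from \cref{sec:intro-norms} (padding with zeros does not increase the norm) yield
\[
\norm{2}{L} \leq \sum_{k=1}^{n_t} \norm{2}{L_{k,k}} + \sum_{k=1}^{n_t-1} \norm{2}{L_{k+1:n_t,k}}.
\]
Since each diagonal block \(L_{k,k}\) is unitary, \(\norm{2}{L_{k,k}} = 1\), and so the first sum is bounded by \(n_t\).

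Second, I bound each subdiagonal block column. From \cref{eq:LR-bounds:L-col}, \(L_{k+1:n_t,k} = A_{k+1:n_t,k}^{(k)}(A_{kk}^{(k)})^{-1}\), so submultiplicativity of the spectral norm gives
\[
\norm{2}{L_{k+1:n_t,k}} \leq \norm{2}{A_{k+1:n_t,k}^{(k)}}\,\norm{2}{(A_{kk}^{(k)})^{-1}}.
\]
The first factor is at most \(\norm{2}{A^{(k)}} \leq \blockgrowth{2}\norm{2}{A}\) by the definition of the block growth factor in \cref{eq:new-growth-factor} combined with the norm-family inequality for the submatrix \(A_{k+1:n_t,k}^{(k)}\). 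The second factor equals \(1/\sigma_{\min}(A_{kk}^{(k)})\), which the hypothesis bounds above by \((\hat{\tau}\norm{2}{A})^{-1}\). Multiplying these gives \(\norm{2}{L_{k+1:n_t,k}} \leq \blockgrowth{2}\hat{\tau}^{-1}\), uniformly in \(k\).

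Summing over the \(n_t - 1 \leq n_t\) off-diagonal columns and combining with the diagonal contribution yields \(\norm{2}{L} \leq n_t + n_t\blockgrowth{2}\hat{\tau}^{-1}\), as claimed. There is no real obstacle here; the only subtlety is recognizing that once \(\sigma_{\min}(A_{kk}^{(k)})\) is controlled directly by BEAM's threshold, we avoid the intermediate step through \(\cond{2}{A}\) used in \cref{thm:block-lu:LR-bounds} and obtain a bound that does not depend on the conditioning of the original matrix.
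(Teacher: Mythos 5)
Your proof is correct and follows essentially the same route as the paper: the assumption gives \(\norm{2}{(A^{(k)}_{kk})^{-1}} \leq \hat{\tau}^{-1}\norm{2}{A}^{-1}\), which combined with \(\norm{2}{A^{(k)}_{k+1:n_t,k}} \leq \blockgrowth{2}\norm{2}{A}\) bounds each subdiagonal block column by \(\blockgrowth{2}\hat{\tau}^{-1}\), and the rest follows the block-column summation of \cref{thm:block-lu:LR-bounds}. No gaps.
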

\begin{proof}
	By the assumption, \(\norm{2}{(A^{(k)}_{kk})^{-1}} \leq \hat{\tau}^{-1}\norm{2}{A}^{-1}\).
	Hence,
	\[
		\norm{2}{A_{k+1:n_t,k}^{(k)}(A^{(k)}_{kk})^{-1}} \leq \blockgrowth{2}\hat{\tau}^{-1}.
	\]
	From there, the proof follows that of \cref{thm:block-lu:LR-bounds}.
\end{proof}


\subsection{Diagonal Dominance}
\label{sec:block-lu:diag-dom}
There are several key classes of matrices that can be accurately factored using pointwise LU without pivoting.
Ideally, block LU, and specifically \ac{beam}, can also accurately factor these matrices, to allow block LU to replace the pointwise version without degrading accuracy.

For pointwise LU, pointwise diagonally dominant matrices are a key class of problems which can be accurately solved without pivoting.
As mentioned in \cref{sec:block-lu:growth}, \(A^{(k)}\) independent of how the diagonal blocks are factored.
Thus, combining the growth of pointwise LU with \cref{thm:growth-relations} gives
\begin{align*}
	\blockgrowth[\mathcal{I}]{\infty} &= 1 & &\text{when strictly diagonally dominant by rows~\cite[Thm. 5.2]{amodioNewApproachBackward1999},} \\
	\blockgrowth[\mathcal{I}]{1} &= 1 & &\text{when strictly diagonally dominant by columns (analogously), and} \\
	\blockgrowth[\mathcal{I}]{\maxrm} &\leq 2 & &\text{when strictly diagonally dominant by either~\cite{wilkinsonErrorAnalysisDirect1961}}
\end{align*}
for any blocking \(\mathcal{I}\).
Similar analysis can show that block LU has small growth for any matrix where pointwise, non-pivoted LU has small growth, such as symmetric positive-definite matrices, totally non-negative matrices, and generalized Higham matrices~\cite{georgeGrowthFactorGaussian2002}.
Furthermore, for matrices that are pointwise diagonally dominant by
columns or symmetric positive definite, we can improve the bound on the
L-factor of the LU factorization \(\norm{}{L}\) from
\cref{thm:block-lu:LR-bounds}.
\begin{theorem}
	Let \(A=LR\) be a block LU factorization where the diagonal blocks of \(L\) are unitary.
	If \(A\) is diagonally dominant by columns,
	\[
          \norm{2}{L} \leq (n_b^{3/2} + 1) n_t.
	\]
	If \(A\) is symmetric positive definite,
	\[
          \norm{2}{L} \leq (\cond{2}{A}^{1/2} + 1) n_t.
	\]
\end{theorem}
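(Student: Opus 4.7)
The plan is to specialize Theorem~\ref{thm:block-lu:LR-bounds} by replacing its crude $\cond{2}{A}$-based bound on each off-diagonal block column of $L$ with tighter estimates that exploit column diagonal dominance or positive-definiteness. Since each $L_{kk}$ is unitary, I first record the identity $\norm{2}{L_{k+1:n_t,k}} = \norm{2}{X_k}$ with $X_k := A^{(k)}_{k+1:n_t,k}(A^{(k)}_{kk})^{-1}$; the block-column triangle inequality then gives $\norm{2}{L} \leq n_t(1 + \max_k \norm{2}{X_k})$, reducing both claims to bounding $\norm{2}{X_k}$.

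For column diagonal dominance, I would use that column-dd is preserved by Schur complementation, so each $A^{(k)}$ is column-dd, and couple $X_k$ to the pointwise LU factors of $A^{(k)}$. If $\tilde{L}_{11},\tilde{U}_{11}$ are the pointwise LU factors of $A^{(k)}_{kk}$ and $\tilde{L}_{21} = A^{(k)}_{k+1:n_t,k}\tilde{U}_{11}^{-1}$, cancelling $\tilde{U}_{11}$ gives $X_k = \tilde{L}_{21}\tilde{L}_{11}^{-1}$. Column-dd forces the strictly-below-diagonal column sums of the pointwise $\tilde{L}$ to be at most $1$, so $\norm{1}{\tilde{L}_{21}} \leq 1$ and $\norm{1}{I - \tilde{L}_{11}} \leq 1$; a Neumann series, truncated at $n_b$ terms by nilpotency, then yields $\norm{1}{\tilde{L}_{11}^{-1}} \leq n_b$ and hence $\norm{1}{X_k} \leq n_b$. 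Because $X_k$ has only $n_b$ columns, $\norm{2}{X_k} \leq \norm{F}{X_k} \leq \sqrt{n_b}\,\norm{1}{X_k} \leq n_b^{3/2}$, which suffices.

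For the SPD case, Schur complementation preserves positive-definiteness, so $A^{(k)}$ and its principal submatrix $A^{(k)}_{kk}$ are both SPD. I would use the pointwise Cholesky factor of $A^{(k)}$: if $L_c$ denotes its leading diagonal block and $G$ the block immediately below, then $A^{(k)}_{kk} = L_cL_c^T$, $A^{(k)}_{k+1:n_t,k} = GL_c^T$, and $X_k = GL_c^{-1}$. Hence $\norm{2}{X_k}^2 \leq \norm{2}{GG^T}/\lambda_{\min}(A^{(k)}_{kk})$. The Cholesky sum gives $GG^T \preceq A^{(k)}_{k+1:n_t,k+1:n_t}$; combining $A^{(k)} \preceq A_{k+1:n_t,k+1:n_t}$ with Cauchy interlacing bounds the numerator above by $\lambda_{\max}(A)$. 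For the denominator, interlacing yields $\lambda_{\min}(A^{(k)}_{kk}) \geq \lambda_{\min}(A^{(k)})$, and the classical identity representing $(A^{(k)})^{-1}$ as a principal block of $A^{-1}$, together with interlacing applied to $A^{-1}$, gives $\lambda_{\min}(A^{(k)}) \geq \lambda_{\min}(A)$. Together these produce $\norm{2}{X_k} \leq \cond{2}{A}^{1/2}$.

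The main subtlety lies in the column-dd argument: the Neumann bound $\norm{1}{\tilde{L}_{11}^{-1}} \leq n_b$ pays off only when paired with the ``narrow-side'' equivalence $\norm{2}{\cdot} \leq \sqrt{n_b}\,\norm{1}{\cdot}$ that exploits the $n_b$ columns of $X_k$ rather than its potentially much taller row count; routing through $\norm{\infty}{\cdot}$ instead would pick up an extra factor of the outer dimension. The SPD argument is largely bookkeeping, but it hinges on combining the right pair of Schur-complement and Cauchy-interlacing inequalities so that both $\lambda_{\max}(A^{(k)}_{k+1:n_t,k+1:n_t})$ and $\lambda_{\min}(A^{(k)}_{kk})$ can be controlled by the extreme eigenvalues of $A$ itself.
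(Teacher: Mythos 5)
Your proposal is correct, and its skeleton matches the paper's: reduce to bounding $X_k = A^{(k)}_{k+1:n_t,k}(A^{(k)}_{kk})^{-1}$, pass to the pointwise LU factors of the (still column-diagonally-dominant) Schur complement so that $X_k = \tilde{L}_{21}\tilde{L}_{11}^{-1}$, and use $\norm{1}{\tilde{L}_{21}}\leq 1$. Where you diverge is in how the factor of $n_b$ attached to $\tilde{L}_{11}^{-1}$ is obtained: the paper proves by an explicit column-update iteration that every subdiagonal entry of $\tilde{L}_{11}^{-1}$ is at most $1$ in magnitude and then invokes the spectral norm of the all-ones triangular matrix ($\norm{2}{T}\leq n_b$, via Elkies), whereas you stay in the operator $1$-norm, use nilpotency of $I-\tilde{L}_{11}$ to truncate the Neumann series and get $\norm{1}{\tilde{L}_{11}^{-1}}\leq n_b$, and only convert to the $2$-norm at the very end through the Frobenius norm of the $n_b$-column matrix $X_k$. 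Both routes land on $n_b^{3/2}$; yours is a little more elementary (no elementwise bound on the inverse, no external spectral-norm estimate), while the paper's elementwise bound is slightly sharper information that happens not to be needed here. For the symmetric positive definite case the paper simply cites Lemma~4.1 of Demmel, Higham, and Schreiber; your Cholesky argument ($X_k = GL_c^{-1}$, $GG^T \preceq A^{(k)}_{k+1:n_t,k+1:n_t}$, interlacing for the numerator, and $\lambda_{\min}(A^{(k)})\geq\lambda_{\min}(A)$ via the inverse-Schur-complement identity for the denominator) is a correct, self-contained reconstruction of essentially that lemma.
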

\begin{proof}
	The proof for diagonally dominant matrices expands on that of Demmel et al.~\cite[p.~182]{demmelStabilityBlockLU1995}.
	Assume \(A\) is diagonally dominant.
	By \cref{eq:LR-bounds:L-col} from \cref{thm:block-lu:LR-bounds}, \(\norm{p}{A_{k+1:n_t,k}^{(k)}(A_{kk}^{(k)})^{-1}}\) is the key quantity to bound.
	As per Demmel et al., consider the non-pivoted, pointwise LU factorization \(A^{(k)} = \overline{L}\overline{U}\).
	(For consistency with the block indexing of \(A^{(k)}\), the
        blocks of \(\overline{L}\) and \(\overline{U}\) are also indexed
        starting with \(k\).)
	Thus,
	\[
		A_{k+1:n_t,k}^{(k)}(A_{kk}^{(k)})^{-1}
		= A_{k+1:n_t,k}^{(k)}\overline{U}_{k,k}^{-1}\overline{L}_{k,k}^{-1}
		= \overline{L}_{k+1:n_t,k}\overline{L}_{k,k}^{-1}.
	\]
	Because \(A\) is diagonally dominant by columns, \[
		\norm{2}{\overline{L}_{k+1:n_t,k}} \leq n_b^{1/2}\norm{1}{\overline{L}_{k+1:n_t,k}} \leq n_b^{1/2},
	\]
	and \(\overline{L}_{k,k}\) is also diagonally dominant by columns.
	Recall that the subdiagonal part of the \(j\)th column of \(\overline{L}_{k,k}^{-1}\) is computed by the iteration:
	\begin{align}
		l_j^{(0)} &= -\overline{L}_{k,k}^{-1}[j+1:n_b, j], \\
		l_j^{(i)}[j+1:i] &= l_j^{(i-1)}[j+1:i], \\
		l_j^{(i)}[i+1:n_b] &= l_j^{(i-1)}[i+1:n_b] - l_j^{(i-1)}[i] \cdot \overline{L}_{k,k}[i+1:n_b, i],
	\end{align}
	with \(l_j^{(n_b)}\) being the resulting column vector.
	Note that \(\norm{1}{\overline{L}_{k,k}[i+1:n_b, i]} \leq 1\).
	So, \begin{align*}
		\norm{1}{l_j^{(i)}[i+1:n_b]}
		&\leq \norm{1}{l_j^{(i-1)}[i+1:n_b]} + |l_j^{(i-1)}[i]| \norm{1}{\overline{L}_{k,k}^{-1}[i+1:n_b, i]} \\
		&\leq \norm{1}{l_j^{(i-1)}[i:n_b]}.
	\end{align*}
	Thus, the subdiagonal elements of \(\overline{L}^{-1}_{kk}\) are each at most 1 in absolute value.
	Let \(T\) be the triangular matrix of all ones, and note that \(T^{-T}T^{-1}\) is the tridiagonal matrix with 2 on the diagonal and -1 on the off-diagonal~\cite{elkiesAnswer2normUpper2011}.
	Hence,
	\[
		\norm{2}{\overline{L}_{k,k}^{-1}} \leq \norm{2}{T} \leq \frac{1}{2\sin\bigr(\frac{\pi}{4n+2}\bigl)} \leq \frac{2n_b+1}{3} \leq n_b.
	\]
	Therefore, by \cref{eq:LR-bounds:L-col},
	\[
		\norm{2}{L_{k+1:n_t,k}}
		\leq \norm{2}{A_{k+1:n_t,k}\overline{U}_{k,k}^{-1}\overline{L}_{k,k}^{-1}}
		\leq n_b^{1/2}\norm{1}{\overline{L}_{k+1:n_t,k}}\norm{2}{\overline{L}_{k,k}^{-1}}
		\leq n_b^{3/2}.
	\]

	The proof for symmetric positive definite matrices follows easily from Lemma~4.1 of Demmel et al.~\cite{demmelStabilityBlockAlgorithms1992}.
\end{proof}

For blocked factorizations, it is also interesting to consider the related \textit{block diagonally dominant matrices}.
Given a matrix norm \(\norm{\alpha}{\cdot}\), a matrix is strictly block diagonally dominant by columns if for all \(1 \leq j \leq n_t\),
\[
	\sum_{\substack{i=1\\i\neq j}}^{n_t}\norm{\alpha}{A_{i,j}} < \norm{\alpha}{(A_{j,j})^{-1}}^{-1},
\]
and analogously for block diagonally dominance by rows.
A matrix is called \textit{strictly block diagonally dominant} if the inequality is sharp.
We assume the blocks for diagonal dominance matches those of the elimination.
Like it's pointwise equivalent, strict block diagonal dominance implies that the matrix is block strongly nonsingular.

The growth factors for the block-norms defined in \cref{sec:intro-norms} (with submultiplicative inner norms) are easily bounded for strictly block-diagonally dominant matrices.
The analysis of block LU by Demmel et al.\ bounds
\(\blockgrowth[\mathcal{I}]{\max\alpha}\!\leq 2\)~\cite[Lemma 3.2]{demmelStabilityBlockLU1995}, and a similar proof shows that \(\blockgrowth[\mathcal{I}]{\Sigma\alpha}\!= 1\) for any submultiplicative norm \(\norm{\alpha}{\cdot}\).
Thus, by the growth-factor equality of \cref{thm:growth-relations}, a block-diagonally dominant matrix has \(\blockgrowth{\alpha} \leq n_t^2\) for any submultiplicative norm and for the Frobenius norm, \(\blockgrowth{F} \leq n_t\).
Furthermore, for matrices block diagonally dominant by columns,
\(\norm{\max1}{A} \leq 2\norm{1}{A}\), and thus, \(\blockgrowth{1} \leq 4\).
Likewise, \(\blockgrowth{\infty} \leq 4\) for matrices block diagonally dominant by rows.

Similarly, the growth factors using the block-norms can be bounded for inverses of block-diagonally dominant matrices.
George and Ikramov have previously analyzed block LU (with identity matrices on the diagonal blocks of \(L\)) on inverses of matrices block-diagonally dominant by rows~\cite{georgeGaussianEliminationStable2004}.
For any submultiplicative inner-norm, \(\norm{\alpha}{\cdot}\), they
showed that \(\blockgrowth{\max\alpha} < 2\).
And because the growth factor is independent of how the diagonal blocks are factored, this bound also holds for any implementation of \cref{alg:block-lu}.
Likewise, because the growth factor is invariant to transposition,
\(\blockgrowth{\max\alpha} < 2\) also holds for inverses of matrices block diagonally dominant by rows.

Using block diagonal dominance and the block max norm, we can improve the bound on \(\norm{p}{L}\norm{p}{R}\) from \cref{thm:block-lu:LR-bounds}.
In fact, the assumption of block diagonal dominance can be relaxed to an assumption on the subdiagonal blocks.
However, this relaxed assumption is harder to determine \textit{a priori}.
For a block size of 1, the assumption is equivalent to the pivot selection constraint in partial pivoting.\footnote{%
Interestingly, this suggests a pivoting strategy for block
factorizations that selects the diagonal block in the \(k\)th step to be
a block in the \(k\)th column and on or below the diagonal that minimizes the product of singular values
\[
  \sigma_{n_b} \left(A^{(k)}_{i,k} \right)^{-1}
  \max_{\substack{k \leq h \leq n_t\\h \neq i}}
  \sigma_1 \left(A^{(k)}_{h,k} \right)
\]
for \(k \leq i \leq n_t\).
However, the additional cost of computing singular values across all the
blocks below the diagonal would likely outweigh any benefits of such an
approach.%
}
In addition to matrices that are strictly block diagonally dominant by columns, inverses of matrices block diagonally dominant by rows satisfy this criteria~\cite[Thm.~4]{georgeBlockLUFactorization2005}.

\begin{theorem}
  \label{thm:block-lu:LR-bounds:orth:pp}
  Let \(\norm{p}{\cdot}\) be the Schatten-\(p\) norm and, in the block LU
  factorization \(A=LR\), let the diagonal blocks of \(L\) be unitary.
	Let \(A\) be strongly nonsingular.
	Assume that \(\norm[\big]{p}{A^{(k)}_{i,k}\bigl(A^{(k)}_{k,k}\bigr)^{-1}} \leq 1\) for \(1 \leq k < i \leq n_t\).
	Then,
	\[
          \norm{\max{}p}{L} \leq n_b^{1/p} \qquad \text{and, by norm equalities,}\qquad  \norm{p}{L} \leq n_b^{1/p}n_t^2.
	\]
	where \(n_b\) is the largest block size.
	Hence,
	\[
          \norm{\max{}p}{L}\norm{\max{}p}{R} \leq
          n_b^{1/p}\blockgrowth{\max p}\norm{\max{}p}{A}
		\qquad \text{and} \qquad
		\norm{p}{L}\norm{p}{R} \leq n_b^{1/p}n_t^3\blockgrowth{p}\norm{p}{A}.
	\]
\end{theorem}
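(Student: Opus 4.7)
The plan is to follow the same blockwise strategy as in \Cref{thm:block-lu:LR-bounds} but to exploit the unitary invariance of the Schatten-\(p\) norm to turn the hypothesis \(\norm[\big]{p}{A^{(k)}_{i,k}(A^{(k)}_{k,k})^{-1}} \leq 1\) directly into a per-block bound. Writing \(A^{(k)}_{k,k} = L_{k,k}R_{k,k}\) with \(L_{k,k}\) unitary gives \(R_{k,k}^{-1} = (A^{(k)}_{k,k})^{-1}L_{k,k}\), so each subdiagonal block satisfies
\[
  L_{i,k} = A^{(k)}_{i,k}R_{k,k}^{-1} = A^{(k)}_{i,k}(A^{(k)}_{k,k})^{-1}L_{k,k}.
\]
Because Schatten-\(p\) norms are invariant under right multiplication by a unitary matrix, \(\norm{p}{L_{i,k}} = \norm[\big]{p}{A^{(k)}_{i,k}(A^{(k)}_{k,k})^{-1}} \leq 1\). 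The diagonal blocks \(L_{k,k}\) have all singular values equal to \(1\), so their Schatten-\(p\) norm is at most \(n_b^{1/p}\) (using the largest block size). Taking the max over all blocks then yields \(\norm{\max p}{L} \leq n_b^{1/p}\).

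To pass from the block-max norm to the Schatten norm, I would invoke the general inequality \(\norm{p}{B} \leq n_t^2 \norm{\max p}{B}\) recalled in \cref{sec:intro-norms} (with \(m_t = n_t\) for a square matrix), giving \(\norm{p}{L} \leq n_b^{1/p}n_t^2\).

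For \(R\), each block satisfies \(R_{k,j} = L_{k,k}^{-1} A^{(k)}_{k,j}\), so again by unitary invariance \(\norm{p}{R_{k,j}} = \norm{p}{A^{(k)}_{k,j}} \leq \norm{\max p}{A^{(k)}}\). Maximizing over \(k,j\) gives \(\norm{\max p}{R} \leq \blockgrowth{\max p}\norm{\max p}{A}\) directly from the definition of the growth factor. Multiplying the two block-max bounds yields the first product estimate. For the Schatten product, I reuse the bound \(\norm{p}{R} \leq n_t\blockgrowth{p}\norm{p}{A}\) already established in the first half of the proof of \Cref{thm:block-lu:LR-bounds} (which only relies on the diagonal blocks of \(L\) being unitary), combine with \(\norm{p}{L} \leq n_b^{1/p}n_t^2\), and collect constants to obtain \(\norm{p}{L}\norm{p}{R} \leq n_b^{1/p}n_t^3\blockgrowth{p}\norm{p}{A}\).

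The only real subtlety is verifying that right-multiplication by a (possibly rectangular or non-square) unitary preserves the Schatten-\(p\) norm in exactly the form used above; this is immediate from the fact that the singular values are unchanged. Otherwise the argument is a careful but routine bookkeeping exercise with block-norm inequalities.
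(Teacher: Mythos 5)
Your proof is correct and follows essentially the same route as the paper: both arguments bound each subdiagonal block via \(L_{i,k}=A^{(k)}_{i,k}\bigl(A^{(k)}_{k,k}\bigr)^{-1}L_{k,k}\) together with unitary invariance of the Schatten norm, bound the diagonal blocks by \(n_b^{1/p}\), reuse the \(R\)-bound from \cref{thm:block-lu:LR-bounds}, and convert between the block-max and Schatten norms with the equivalences of \cref{sec:intro-norms}. Your write-up is simply more explicit about the unitary-invariance step than the paper's.
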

\begin{proof}
	First, similar to the bounds of \(\norm{p}{R}\) from \cref{thm:block-lu:LR-bounds},
	\[
          \norm{\max{}p}{R} \leq \blockgrowth{\max p}\norm{\max{}p}{A}.
	\]
	Next, note that
	\[
          \norm{\max{}p}{L}
		\leq \max_{1 \leq k \leq i \leq n_t}\!\!\left(\norm{p}{L_{i,k}}\right)
		\leq \max\left(n_b^{1/p}\!, \max_{1 \leq k < i \leq n_t}\!\!\left(\norm{p}{L_{i,k}}\right)\right).
	\]
	Similar to the proof of \cref{thm:block-lu:LR-bounds}, for \(i>k\) we have
	\[
		\norm{p}{L_{i,k}} = \norm{p}{A^{(k)}_{i,k}\bigl(A^{(k)}_{k,k}\bigr)^{-1}} \leq 1.
	\]
        Therefore, \(\norm{\max{}p}{L} \leq n_b^{1/p}\).
%
%
\end{proof}

Combining \cref{thm:block-lu:LR-bounds:orth:pp} with the bounds on growth factor implies that common formulations of block LU have excellent backward stability when \(A\) is block diagonally dominant by columns, specifically
\[
  \norm{\max{}p}{L}\norm{\max{}p}{R} \leq 2n_b^{1/p}\norm{\max{}p}{A}
	\qquad \text{and} \qquad
	\norm{p}{L}\norm{p}{R} \leq 2n_t^2n_b^{1/p}\norm{p}{A}.
\]
Furthermore, when \(A\) is the inverse of a matrix block diagonally dominant by rows, we have
\[
  \norm{\max{}p}{L}\norm{\max{}p}{R} \leq 4n_b^{1/p}\norm{\max{}p}{A}
	\qquad \text{and} \qquad
	\norm{p}{L}\norm{p}{R} \leq 4n_t^2n_b^{1/p}\norm{p}{A}.
\]

The case of block diagonal dominance by rows is complicated by the potential for the block rows to be poorly scaled, which can result in a large value of \(A^{(k)}_{i,k}\bigl(A^{(k)}_{k,k}\bigr)^{-1}\).
One obvious solution is to make the diagonal blocks of \(R\) unitary instead of those of \(L\); however, this requires detecting that the matrix is block diagonally dominant by rows ahead of time.
However, if the block rows are similarly scaled, such matrices will generally be well conditioned~\cite{varahLowerBoundSmallest1975}.

\subsection{Discussion}

By setting the block size to 1, the blockwise theory can be applied to pointwise algorithms.
For example, applying \cref{thm:block-lu-submult,thm:block-lu:LR-bounds:orth:pp} with a block size of 1, the max-norm, and partial pivoting gives
\[
	A = \widehat{L}\widehat{R} + E
		\qquad\text{where}\qquad
	\norm{\maxrm}{E} \leq 3(n-1)u\uniblockgrowth{\maxrm}\norm{\maxrm}{A},
\]
which is within a factor of 1.5 of Wilkinson's classic bound~\cite[Chap 3, (16.13)]{wilkinsonRoundingErrorsAlgebraic1963}.\footnotemark
\footnotetext{Note that the \(\Oh{n}\) term here is smaller than the often cited \(\Oh{n^3}\) term~\cite[Thm.~9.5]{highamAccuracyStabilityNumerical2002}.  However, the latter bound is based on the infinity norm and allowing the summations to occur in any order.}
Similarly, applying \cref{thm:block-lu-submult,thm:block-lu:LR-bounds:orth:pp} with the infinity norm gives
\[
	\norm{\infty}{E} \leq (2n^2-n-1)u\uniblockgrowth{\infty}\norm{\infty}{A} + \Oh{u^2},
\]
which is within a factor of 4 of Amodio and Mazzia's bound~\cite{amodioNewApproachBackward1999}.
Finally, applying \cref{thm:block-lu:forward-sub} to a block size of 1 and any absolute, submultiplicative norm gives
\[
	\norm{}{E} \leq 5nu\norm{}{\widehat{L}} + \Oh{u^2},
\]
which is within a factor of 5 of traditional pointwise analyses~\cite[Thm.~8.5]{highamAccuracyStabilityNumerical2002}.
This indicates our blockwise error analysis is reasonably tight in terms of \(n_t\).

Our analysis improves the state-of-the-art error bound for blockwise LU in two ways.
First, it applies to a more general family of block LU factorizations than just those with identity matrices for the diagonal blocks of \(L\).
Additionally, it tightens the bounds for some types of matrices.
\Cref{tab:comparison} compares our bounds with those of Demmel et al.~\cite[Table~4]{demmelStabilityBlockLU1995} and subsequent results~\cite{highamAccuracyStabilityNumerical2002,georgeGaussianEliminationStable2004} for a block LU with identity matrices on the diagonal blocks of \(L\) and normwise stable block operations.
Note that the coefficients for the polynomial in \(n\), \(p(n)\),
are omitted since they are not provided in the analysis of Demmel et al.
\begin{table}
	\caption{Comparison of stability results for block LU, excluding the polynomial-in-\(n\), \(p(n)\), terms.}
	\fixlabel{tab:comparison}
	\centering
	\addtolength{\tabcolsep}{-3pt}
	\begin{tabular}{cccc}
		\toprule
		Matrix & This work & Previous & Source \\
		\midrule
		symmetric pos. definite & \(\cond{}{A}^{1/2}\) & \(\cond{}{A}^{1/2}\) & Demmel et al.~\cite{demmelStabilityBlockLU1995} \\
		block diag. dom. by cols. & 1 & 1 & Demmel et al.~\cite{demmelStabilityBlockLU1995}  \\
		inverse of ``\hspace{5.5em}'' & \(\cond{}{A}\) & \((\uniblockgrowth[1]{})^3\cond{}{A}\) & Demmel et al.~\cite{demmelStabilityBlockLU1995} \\
		point diag. dom. by cols. & 1 & 1 & Higham~\cite{highamAccuracyStabilityNumerical2002}  \\
		block diag. dom. by rows  & \(\cond{}{A}\) & \((\uniblockgrowth[1]{})^3\cond{}{A}\) & Demmel et al.~\cite{demmelStabilityBlockLU1995}  \\
		inverse of ``\hspace{5.5em}'' & 1 & 1 & George et al.~\cite{georgeBlockLUFactorization2005} \\
		point diag. dom. by rows  & \(\cond{}{A}\) & \(\cond{}{A}\) & Demmel et al.~\cite{demmelStabilityBlockLU1995} \\
		\ac{beam} modified & \((\blockgrowth[\mathcal{I}]{})^2\min(\cond{}{A},\hat{\tau}^{-1})\) & \((\uniblockgrowth[1]{})^3\cond{}{A}\) & Demmel et al.~\cite{demmelStabilityBlockLU1995} \\
		general & \((\blockgrowth[\mathcal{I}]{})^2\cond{}{A}\) & \((\uniblockgrowth[1]{})^3\cond{}{A}\) & Demmel et al.~\cite{demmelStabilityBlockLU1995} \\
		\bottomrule
	\end{tabular}
	\addtolength{\tabcolsep}{3pt}
\end{table}
These improved bounds show that block LU is more accurate than previously proven.
Most notably, the previous analysis depends on the pointwise growth factor, and is thus only useful for matrices that can be safely factored by pointwise LU.
By instead using a block growth factor, our analysis can be used for matrices that cannot be safely solved with pointwise LU.
For example, a matrix whose leading block is \(\begin{bsmallmatrix}0 & 1\\1 & 0\end{bsmallmatrix}\) effectively has an infinite pointwise growth factor, but doesn't necessarily pose any issue to block LU.
Furthermore, the bound for the general case is only quadratic in the growth factor instead of cubic, showing that the algorithm is more robust in the face of moderate element growth than previously thought.

\section{Singular values of modified matrices}
\label{sec:svals}

A key aspect of the \acf{beam} method is the use of additive modifications to reduce element growth by fixing problematic leading blocks.
However, the modifications raise two important questions:
\begin{itemize}
\item Can they make the matrix ill-conditioned?
\item How do they affect element growth?
\end{itemize}
Note that, similar to \cref{eq:growth-bound-leading-submatrices}, the growth for \ac{beam} is bounded by
\[
	\Rho_\alpha \leq 1 + \max_k\norm{\alpha}{\widetilde{A}^{-1}_{1:k, 1:k}}\norm{\alpha}{A}
\]
where \(\widetilde{A}\) is the modified matrix.
Thus, these two questions are both equivalent to whether the smallest singular value of a modified matrix can be bounded.
Note that, by construction, the modifications prevent both \(\widetilde{A}\) and its leading principal submatrices from being exactly singular.
In addition to \ac{beam}, most modern implementations of sparse direct solvers will modify individual diagonal elements that are too small after pivoting~\cite{liMakingSparseGaussian1998,duffDirectMethodsSparse2017}.
Thus, understanding additive modifications will also provide insight into those solvers.

Previous work showed analytically that if \(\hat{\tau}\cond{2}{A} \ll 1\), neither the modified matrix nor the Woodbury formula's capacitance matrix would be problematically ill-conditioned~\cite[Thms.~4.1 and~4.3]{lindquistUsingAdditiveModifications2023}.
Unfortunately, for matrices with large condition numbers, that analysis is only usable for very small perturbations.
However, previous experimental results showed BEAM successfully solving ill-conditioned systems with moderately sized tolerances when the Woodbury formula is used~\cite{lindquistUsingAdditiveModifications2023}.
Additionally, as per \cref{tab:comparison}, \ac{beam} improves the upper bound on the numerical error when \(\hat{\tau}\cond{}{A} > 1\), making such configurations desirable.
Ideally, the modified matrix will never have worse conditioning than the original; unfortunately, examples can easily be found where the perturbation reduces the smallest singular value, for example, \(\begin{bsmallmatrix}0 & 1 \\ 1 & 0\end{bsmallmatrix}\) with a block size of 1.

If the smallest singular value of the modified matrix can be bounded, the theoretical analysis of \ac{beam} can be tightened.
First, bounding the smallest singular value directly bounds the condition number of \(\widetilde{A}\), which is important for the accuracy of both the Woodbury formula and iterative refinement.
Furthermore, the condition number of the capacitance matrix can be more tightly bounded, as shown in the following theorem.  Compare the previous bound~\cite[Thm.~4.3]{lindquistUsingAdditiveModifications2023} which requires \(\hat{\tau} < 1\) in order to obtain
\[
	\cond{2}{C} \leq (1 + \tfrac{\hat{\tau}}{1-\hat{\tau}}\cond{2}{\widetilde{A}})(1 + \hat{\tau}\cond{2}{A}).
\]
\begin{theorem}
Let \(\widetilde{A} = A + M_U M_\Sigma M_V^T\) be computed per \cref{alg:beam}.
Assume \(\norm{2}{\widetilde{A}^{-1}} \!\leq\! \psi \norm{2}{A^{-1}}\) for some \(\psi\).
Then, the condition number of the capacitance matrix, \(C = I - M_\Sigma M_V^T\widetilde{A}^{-1}M_U\), is bounded by
\[
	\cond{2}{C} \leq (1 + \hat{\tau}\psi\cond{2}{A})(1 + \hat{\tau}\cond{2}{A^{-1}})
\]
\end{theorem}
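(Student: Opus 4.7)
The plan is to bound $\norm{2}{C}$ and $\norm{2}{C^{-1}}$ separately and combine them via $\cond{2}{C} = \norm{2}{C}\norm{2}{C^{-1}}$. The first bound is a direct application of submultiplicativity, while the second requires first identifying a useful closed form for $C^{-1}$; this identification is the crux of the argument.

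For $\norm{2}{C}$, the relevant observations are that each of $M_U$ and $M_V$ has orthonormal columns---by inspection of \cref{alg:beam}, the nonzero entries of each column are a column of an SVD factor $U_k$ or $V_k$, and columns coming from different diagonal blocks have disjoint support---so $\norm{2}{M_U} = \norm{2}{M_V} = 1$, while $\norm{2}{M_\Sigma} \leq \tau = \hat{\tau}\norm{2}{A}$ by construction. The triangle inequality, submultiplicativity, and the hypothesis $\norm{2}{\widetilde{A}^{-1}} \leq \psi\norm{2}{A^{-1}}$ then give $\norm{2}{C} \leq 1 + \hat{\tau}\psi\cond{2}{A}$.

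For the inverse, I would propose the closed form $C^{-1} = I + M_\Sigma M_V^T A^{-1} M_U$. The verification starts from the defining relation $M_U M_\Sigma M_V^T = \widetilde{A} - A$, which after left-multiplying by $\widetilde{A}^{-1}$ yields the key identity $\widetilde{A}^{-1} M_U M_\Sigma M_V^T = I - \widetilde{A}^{-1} A$. Expanding $C\,(I + M_\Sigma M_V^T A^{-1} M_U)$ and substituting this identity into the cross term causes the new terms to cancel the non-identity terms, leaving exactly $I$. With the closed form in hand, the same triangle-inequality argument as above bounds $\norm{2}{C^{-1}} \leq 1 + \hat{\tau}\cond{2}{A^{-1}}$, and multiplying the two estimates gives the stated result (using $\cond{2}{A^{-1}}=\cond{2}{A}$ as needed to match the form of the bound).

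The main obstacle is guessing the closed form for $C^{-1}$; once conjectured, the verification is a short algebraic manipulation amounting to a duality between the two capacitance matrices associated with a Sherman--Morrison--Woodbury update. A noteworthy feature of this route is that it avoids any restriction of the form $\hat{\tau}<1$ present in the earlier bound, trading it for the more flexible hypothesis $\norm{2}{\widetilde{A}^{-1}} \leq \psi\norm{2}{A^{-1}}$.
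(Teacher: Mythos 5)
Your proof is correct, and it takes a genuinely different route from the paper's. The paper's proof is a two-line citation: it invokes the bound $\cond{2}{C} \leq (1 + \tau\norm{2}{\widetilde{A}^{-1}})(1 + \tau\norm{2}{A^{-1}})$ from Theorem~4.3 of the earlier BEAM paper and then substitutes $\norm{2}{\widetilde{A}^{-1}} \leq \psi\norm{2}{A^{-1}}$ and $\tau = \hat{\tau}\norm{2}{A}$. You instead reconstruct the cited lemma from scratch, and your reconstruction is sound: $\norm{2}{M_U} = \norm{2}{M_V} = 1$ holds because columns drawn from distinct diagonal blocks have disjoint support while columns from the same block are orthonormal columns of a single unitary factor, and your proposed closed form $C^{-1} = I + M_\Sigma M_V^T A^{-1} M_U$ checks out --- expanding the product and substituting $\widetilde{A}^{-1}M_U M_\Sigma M_V^T = I - \widetilde{A}^{-1}A$ does make everything cancel. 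What your version buys is self-containment and, incidentally, a correction: the intermediate display in the paper's proof writes the second factor as $1 + \tau\norm{2}{A}$, which would be $1 + \hat{\tau}\norm{2}{A}^2$ and does not yield the claimed $1 + \hat{\tau}\cond{2}{A}$; your derivation shows the factor should be $1 + \tau\norm{2}{A^{-1}}$ (coming from $\norm{2}{C^{-1}} \leq 1 + \norm{2}{M_\Sigma}\norm{2}{A^{-1}}$), confirming that the theorem's conclusion is right and the paper's displayed intermediate bound contains a typo. The only cosmetic point is that you should note explicitly that the two factors you obtain bound $\norm{2}{C}$ and $\norm{2}{C^{-1}}$ respectively, and that $\cond{2}{A^{-1}} = \cond{2}{A}$ reconciles your product with the form stated in the theorem; both are immediate.
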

\begin{proof}
From the previous bound~\cite[Thm.~4.3]{lindquistUsingAdditiveModifications2023},
\[
	\cond{2}{C} \leq (1 + \tau\norm{2}{\widetilde{A}^{-1}})(1 + \tau\norm{2}{A}).
\]
Then, substituting the assumption on \(\norm{2}{\widetilde{A}^{-1}}\) and the definition of \(\tau\) gives
\[
	\cond{2}{C} \leq (1 + \hat{\tau}\psi\cond{2}{A})(1 + \hat{\tau}\cond{2}{A}).
\]
\end{proof}

Unfortunately, bounding \(\psi\) is a difficult question.
Because both \(A\) and \(\widetilde{A}\) are nonsingular, the Woodbury formula~\cite{woodburyInvertingModifiedMatrices1950} gives
\begin{equation}
	\label{eq:A-tilde-woodbury}
	\widetilde{A}^{-1} = A^{-1}(I - M_U(I + M_\Sigma M_V^TA^{-1}M_U)^{-1}M_\Sigma M_V^T A^{-1}).
\end{equation}
Thus, we have
\begin{align}
	\psi
	&\leq 1 + \norm{2}{(I + M_\Sigma M_V^TA^{-1}M_U)^{-1}}\norm{2}{M_\Sigma M_V^TA^{-1}} \nonumber \\
	\label{eq:psi-bound}
	&\leq 1 + \norm{2}{(I + M_\Sigma M_V^TA^{-1}M_U)^{-1}}\hat{\tau}\cond{2}{A}.
\end{align}
When \(\hat{\tau}\cond{2}{A} < 1\), the norm of the inverse of the capacitance matrices is bounded by \({(1 - \hat{\tau}{\cond{2}{A}})^{-1}}\), and so
\begin{equation}
	\label{eq:psi-bound-general}
	\psi \leq 1 + \frac{\hat{\tau}\cond{2}{A}}{1 - \hat{\tau}{\cond{2}{A}}} = \frac{1}{1 - \hat{\tau}{\cond{2}{A}}}.
\end{equation}
Note that this is equivalent to the bound obtained by direct perturbation analysis of the singular values~\cite[Thm.~4.1]{lindquistUsingAdditiveModifications2023}.
Thus, if \(\hat{\tau}\cond{2}{A} \leq 0.5\), we have \(\psi \leq 2\).
Furthermore, \(M_V^T\) and \(M_U\) will select just a small portion of the inverse, which should usually reduce the \(\norm{2}{A^{-1}}\) term in the capacitance matrix; however, this is hard to quantify as a general bound. 
Similar logic implies that whenever \(\hat{\tau}\norm{2}{A}\norm{2}{(A_{1:k,1:k})^{-1}} \leq 0.5\), we have
\[
	(\widetilde{A}_{1:k,1:k})^{-1} \leq 2(A_{1:k,1:k})^{-1},
\]
which in turn limits the growth factor of \(\widetilde{A}\).
Unfortunately, it is possible for \(A\) to have both small growth factor and a leading principal submatrix with small singular value, opening the potential for the modifications to significantly increase the growth factor.

While \(\psi\) is difficult to bound in the general case, it can be easily bounded for a particular class of well behaved modifications.
Rearranging \cref{eq:A-tilde-woodbury} and taking the norm gives
\begin{align*}
	\psi
	&\leq 1 + \norm{2}{M_\Sigma^{1/2}}\norm{2}{(I + M_\Sigma^{1/2}M_V^TA^{-1}M_UM_\Sigma^{1/2})^{-1}}\norm{2}{M_\Sigma^{1/2}}\norm{2}{M_V^TA^{-1}} \\
	&\leq 1 + \norm{2}{(I + M_\Sigma^{1/2}M_V^TA^{-1}M_UM_\Sigma^{1/2})^{-1}}\hat{\tau}\cond{2}{A}.
\end{align*}
So, let \(F = M_\Sigma^{1/2}M_V^TA^{-1}M_UM_\Sigma^{1/2}\) and note that for any conformal \(x\),
\begin{align*}
	\norm{2}{(I+F)x}^2 = x^H(I+F)^H(I+F)x = x^Hx + x^H(F^HF + F + F^H)x.
\end{align*}
Thus, \(\norm{2}{(I+F)} \geq 1\) if and only if \(F^HF + F + F^H\) is positive semidefinite.
Therefore, when \(F^HF + F + F^H\) is positive semidefinite,
\begin{equation}
	\label{eq:psi-bound-spd}
	\psi \leq 1 + \hat{\tau}\cond{2}{A}.
\end{equation}
Additionally, this bound is smaller than \cref{eq:psi-bound-general} even when \(\hat{\tau}\cond{2}{A} < 1\).

Notably, if the Hermitian part of \(F\) is positive semidefinite, then \(F^HF + F + F^H\) is the sum of positive semidefinite matrices, and must be positive semidefinite itself.
Furthermore, because \(M_\Sigma\) is diagonal with positive values, \(F\) has positive semidefinite Hermitian part if and only if \(M_V^TA^{-1}M_U\) does.
Also, if a matrix is normal and has only eigenvalues with non-negative real part, then its Hermitian part is positive semidefinite. 
So, for these classes of \(F\) matrices, \(\psi\) is bounded, even when \(\hat{\tau}\cond{2}{A} \geq 1\).

The most important application of this result is for Hermitian positive or negative definite matrices.
Because \(M_U\) and \(M_V\) are singular vectors of diagonal blocks and block Gaussian Elimination preserves definiteness, \(M_V = M_U\) or \(-M_V = M_U\), respectively, for a positive or negative definite \(A\).
Hence, in both cases \(F\) is Hermitian positive definite, and \cref{eq:psi-bound-spd} holds.


We next look at using the properties of \ac{beam} to bound
\(\norm{2}{\widetilde{A}^{-1}}\) independently of \(\norm{2}{A}\).
Note that the block triangular structure of the \(L\) and \(R\) factors imply that the determinant of \(\widetilde{A}\) is the product of the singular values of all the diagonal blocks.
Thus, the quotient formula for determinants~\cite[Thm.~1.4]{zhangSchurComplementIts2005} provide a finite bound for \(\psi\) and the growth factor for general matrices, independent of the condition number of \(A\).
Unfortunately, this bound is exponential, limiting its use as a practical bound.

\begin{theorem}
	Let \(n_k\) be the dimension of the \(k\)th diagonal block, and let \(n_{1:k}\) be the dimension of the leading principal submatrix containing the first \(k\) diagonal blocks.
	Assume that \(\sigma_{\mathrm{min}}(\widetilde{A}_{k,k}^{(k)}) \geq \tau = \hat{\tau}{\norm{2}{A}}\).
	Then,
	\begin{align*}
		\norm{2}{\widetilde{A}^{-1}}
		&\leq \left(\frac{\norm{2}{\widetilde{A}}}{\norm{2}{A}}\right)^{\!\!n-1} \!\hat{\tau}^{-n}\cond{2}{A}^{-1}\norm{2}{A^{-1}}, \\
		\cond{2}{\widetilde{A}}
		&\leq \left(\frac{\norm{2}{\widetilde{A}}}{\norm{2}{A}}\right)^{\!\!n} \!\hat{\tau}^{-n}, &\text{and} \\
		\norm{2}{A}\norm{2}{\widetilde{A}_{1:k, 1:k}^{-1}}
		&\leq \left(\frac{\norm{2}{\widetilde{A}_{1:k, 1:k}}}{\norm{2}{A}}\right)^{\!\!n_{1:k}-1} \!\hat{\tau}^{-n_{1:k}}.
	\end{align*}
	Recall that \(\norm{2}{\widetilde{A}} / \norm{2}{A} \leq (1 + \hat{\tau})\).
\end{theorem}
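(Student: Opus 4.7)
The plan is to reduce each bound to the same determinantal chain: use the standard inequality $\sigma_{\min}(B) \geq |\det B|/\norm{2}{B}^{n-1}$, then evaluate $\det \widetilde{A}$ via the quotient formula, and finally invoke the assumption $\sigma_{\min}(\widetilde{A}_{k,k}^{(k)}) \geq \tau$ to bound the individual Schur-complement determinants from below.

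First I would establish the elementary fact that for any square matrix $B$ of dimension $d$, writing $|\det B| = \prod_{i=1}^d \sigma_i(B) \leq \sigma_{\min}(B)\,\norm{2}{B}^{d-1}$ gives $\sigma_{\min}(B) \geq |\det B|/\norm{2}{B}^{d-1}$, i.e.\ $\norm{2}{B^{-1}} \leq \norm{2}{B}^{d-1}/|\det B|$. Next I would apply the quotient formula for determinants referenced in the statement to write $\det \widetilde{A} = \prod_{k=1}^{n_t} \det \widetilde{A}_{k,k}^{(k)}$, and analogously for any leading principal submatrix aligned to the block partitioning, $\det \widetilde{A}_{1:k,1:k} = \prod_{j=1}^{k} \det \widetilde{A}_{j,j}^{(j)}$. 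Then, since each Schur-complement diagonal block $\widetilde{A}_{k,k}^{(k)}$ has all singular values bounded below by $\tau$ by assumption, the product-of-singular-values identity gives $|\det \widetilde{A}_{k,k}^{(k)}| \geq \tau^{n_k}$; multiplying over $k$ yields $|\det \widetilde{A}| \geq \tau^n$ and $|\det \widetilde{A}_{1:k,1:k}| \geq \tau^{n_{1:k}}$.

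Combining these pieces gives $\norm{2}{\widetilde{A}^{-1}} \leq \norm{2}{\widetilde{A}}^{n-1}/\tau^n$. Substituting $\tau = \hat{\tau}\norm{2}{A}$ and pulling out the factors of $\norm{2}{A}$ yields the first bound once we identify $\norm{2}{A}^{-1} = \cond{2}{A}^{-1}\norm{2}{A^{-1}}$. The second bound then follows by multiplying both sides by $\norm{2}{\widetilde{A}}$ to obtain $\cond{2}{\widetilde{A}}$; the single extra factor of $\norm{2}{\widetilde{A}}/\norm{2}{A}$ raises the base exponent from $n-1$ to $n$. The third bound is an immediate application of the same argument to the leading principal submatrix $\widetilde{A}_{1:k,1:k}$ of dimension $n_{1:k}$, multiplying through by $\norm{2}{A}$ to match the stated form.

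There is no real obstacle here beyond careful bookkeeping of exponents. The only substantive ingredient is the quotient formula for determinants, which guarantees that the block LU Schur complements produced by \cref{alg:beam} truly factor the determinant of $\widetilde{A}$ (and of each block-aligned leading principal submatrix) into a product over the diagonal blocks; every subsequent inequality is just $|\det B| \leq \sigma_{\min}(B)\norm{2}{B}^{d-1}$ applied once at the top level. The resulting exponential dependence on $n$ is unavoidable with this approach, reflecting the well-known looseness of the $\sigma_{\min} \geq |\det|/\norm{2}{\cdot}^{n-1}$ inequality when the other singular values are allowed to be as large as $\norm{2}{\widetilde{A}}$; this is exactly why the theorem is flagged in the surrounding discussion as finite but of limited practical use.
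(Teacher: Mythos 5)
Your proposal is correct and follows essentially the same route as the paper's proof: the quotient formula to factor \(\det\widetilde{A}_{1:k,1:k}\) into a product of Schur-complement determinants, the lower bound \(|\det\widetilde{A}_{i,i}^{(i)}|\geq\tau^{n_i}\) from the singular-value assumption, and the inequality \(\sigma_{\min}(B)\geq|\det B|/\norm{2}{B}^{d-1}\) to convert this into a bound on \(\norm{2}{\widetilde{A}_{1:k,1:k}^{-1}}\). The bookkeeping with \(\tau=\hat{\tau}\norm{2}{A}\) and the derivation of the first two bounds from the third (with \(k=n_t\)) match the paper exactly.
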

\begin{proof}
	By repeated application of the quotient formula~\cite[Thm. 1.4]{zhangSchurComplementIts2005},
	\begin{align*}
		\det(\widetilde{A}_{1:k, 1:k})
		&= \prod_{i=1}^k \det(\widetilde{A}_{1:i,1:i} / \widetilde{A}_{1:i-1, 1:i-1}).
	\end{align*}
	By the selection of the modifications and the fact that the absolute determinant equals the product of the singular values,
	\[
		|\det(\widetilde{A}_{1:i,1:i} / \widetilde{A}_{1:i-1, 1:i-1})| \geq \tau^{n_k}.
	\]
	Furthermore,
	\[
		|\det(\widetilde{A}_{1:k, 1:k})| = \prod_{j=1}^{n_{1:k}} \sigma_j(\widetilde{A}_{1:k, 1:k}).
	\]
	Thus,
	\begin{align}
		\nonumber
		\norm{2}{(\widetilde{A}_{1:k, 1:k})^{-1}} = \sigma_{\textrm{min}}(\widetilde{A}_{1:k, 1:k})^{-1}
		&= \frac{\prod_{j=1}^{n_{1:k}-1} \sigma_j(\widetilde{A}_{1:k, 1:k})}{\prod_{i=1}^k \det(\widetilde{A}_{1:i,1:i} / \widetilde{A}_{1:i-1, 1:i-1})} \\
		\label{eq:inv-norm-main-inequality}
		&\leq \frac{\norm{2}{\widetilde{A}_{1:k, 1:k}}^{n_{1:k}-1}}{\hat{\tau}^{n_{1:k}}\norm{2}{A}^{n_{1:k}}} \\
		\nonumber
		&= \left(\frac{\norm{2}{\widetilde{A}_{1:k, 1:k}}}{\norm{2}{A}}\right)^{\!\!n_{1:k}-1} \!\hat{\tau}^{-n_{1:k}} \norm{2}{A}^{-1}.
	\end{align}
	From here, the desired inequalities easily follow.
	(The first two use \(k = n_t\).)
\end{proof}

For common behavior, this theorem is very pessimistic.
Looseness in \cref{eq:inv-norm-main-inequality} is the primary source of pessimism.
First, in the numerator, the inequality can only be tight for matrices
in which all the singular values are equal except the smallest one.
However, for most matrices in practice (particularly those that are ill-conditioned), many of the singular values are much smaller than the maximum.
Second, in the denominator it is unlikely that most of the singular
values of every diagonal block will be less than or equal to the
modification threshold.
Furthermore, the tightness of these two inequalities are somewhat at odds: the former corresponds to the columns being highly independent of each other, while the latter corresponds to the columns of the leading principal submatrices being either highly dependent or of small magnitude.
However, this tension is not enough to still prevent problematically large growth, as shown below for a matrix of Zielke.

To better understand the achievable worst case growth (and by extension, the singular values of the leading submatrices),
we consider the growth incurred when factoring the nonsymmetric matrix of Zielke
with \(a=0\)~\cite{zielkeTestmatrizenMitMaximaler1974}.
Previous experiments factoring this matrix with \ac{beam} resulting in large enough growth to cause numerical overflow in double precision~\cite{lindquistUsingAdditiveModifications2023}.
Zielke's nonsymmetric matrix takes the form
\[
	\begin{bmatrix}
		0 & 1 & 1 & \dots & 1 & 1 \\
		0 & 0 & 1 & \dots & 1 & 1 \\
		0 & 0 & 0 & \dots & 1 & 1 \\
		\vdots & \vdots & \vdots & \ddots & \vdots \\
		0 & 0 & 0 & \dots & 0 & 1 \\
		-1 & 0 & 0 & \dots & 0 & 0 \\
	\end{bmatrix}.
\]
Note that for large \(n\), the smallest singular value is approximately \(1/2\), while the largest is approximately \(2n/\pi\)~\cite{elkiesAnswer2normUpper2011}.
As can be seen by the first diagonal element, regular non-pivoted LU breaks down (i.e., it effectively has an infinite growth factor).
So, for this case \ac{beam} is technically an improvement but, as seen in the following result, insufficient for providing a meaningful solution.
Note that this theorem is based on \(\tau\), not \(\hat{\tau}\); when \(\hat{\tau} \ll \tfrac{\pi}{4n}\), the assumption is satisfied.

\begin{theorem}
	\label{thm:zielke-growth}
	Let \(0 < \tau \leq \tfrac{1}{2}\) and assume all of the blocks are at least of size 2.
	Then, applying the \ac{beam} algorithm to the Zielke matrix results in a growth of \(\blockgrowth{\maxrm} = \tau^{1-n_t}\)
\end{theorem}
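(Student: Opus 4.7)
The plan is to prove by induction on $k$ that every intermediate Schur complement $A^{(k)}$ for $k = 1, \dots, n_t - 1$ has the following rigid form: its top $m - 1$ rows agree with the strictly upper triangular matrix of ones $J_m$ (where $m$ denotes the current dimension), while its last row is the constant vector $c_k \mathbf{1}^T$ with $|c_k| = \tau^{-k}$. Once that invariant is in hand, I conclude using $\tau \leq 1/2 < 1$ that $\norm{\maxrm}{A^{(k)}} = \tau^{-k}$, so that maximizing over $k$ together with $\norm{\maxrm}{A} = 1$ immediately gives $\blockgrowth{\maxrm} = \tau^{-(n_t-1)} = \tau^{1-n_t}$.

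For the base case I would observe that the leading $k_1 \times k_1$ block of $Z$ is $J_{k_1}$, which has rank $k_1 - 1$; its unique zero singular value has right and left singular vectors $e_1$ and $e_{k_1}$ (up to sign), so \ac{beam}'s modification replaces the block by $M = J_{k_1} + \tau e_{k_1} e_1^T$. A short computation shows that $M^{-1}$ has columns $e_2,\, e_3 - e_2,\, \dots,\, e_{k_1} - e_{k_1 - 1},\, \tau^{-1} e_1$, and in particular $\mathbf{1}^T M^{-1} = (1, 0, \dots, 0, \tau^{-1})$. Since the subdiagonal block $Z_{k_1+1:n, 1:k_1}$ has only the single entry $-1$ in its bottom-left corner and the superdiagonal block $Z_{1:k_1, k_1+1:n}$ is the all-ones matrix, carrying out the Schur-complement update produces $A^{(1)}$ in the claimed form with $c_1 = \tau^{-1}$. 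For the inductive step I would use that, because $k < n_t$, the leading block of $A^{(k-1)}$ is again $J_{k_k}$; the subdiagonal block has zero rows except the last, which is $c_{k-1} \mathbf{1}^T$; and the superdiagonal block remains all ones. Applying $\mathbf{1}^T M^{-1} = (1, 0, \dots, 0, \tau^{-1})$ and simplifying the rank-$k_k$ update yields the recurrence $c_k = -c_{k-1}/\tau$, which together with $c_1 = \tau^{-1}$ gives $|c_k| = \tau^{-k}$ and closes the induction.

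The main obstacle is justifying that \ac{beam}'s modification touches only the single zero singular value of each $J_{k_k}$ and leaves the positive ones alone; otherwise the rigid structure above immediately collapses and the factored block ceases to be $M = J_{k_k} + \tau e_{k_k} e_1^T$. This reduces to showing $\tau \leq 1/2 < \sigma_{\min}^{+}(J_{k'})$ for every block size $k' \geq 2$ that can appear in $\mathcal{I}$. I would argue this by noting that the nonzero singular values of $J_{k'}$ are the square roots of the eigenvalues of the $(k'-1) \times (k'-1)$ matrix $K$ defined by $K_{ij} = \min(i, j)$; its inverse is the tridiagonal matrix with diagonal $(2, 2, \dots, 2, 1)$ and off-diagonal $-1$, whose eigenvalues admit the closed form $4 \sin^{2}\bigl((2j-1)\pi / (4k' - 2)\bigr)$ for $j = 1, \dots, k' - 1$. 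The maximum over $j$ is strictly less than $4$ for every finite $k'$, so $\sigma_{\min}^{+}(J_{k'}) > 1/2$ uniformly for all $k' \geq 2$, and the hypothesis $\tau \leq 1/2$ is exactly what the argument requires.
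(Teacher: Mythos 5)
Your proof is correct and follows essentially the same route as the paper: isolate the single zero singular value of each leading block \(J_m\), check that the remaining singular values exceed \(\tau \leq 1/2\) so only one modification occurs, invert the modified block explicitly, and track how each Schur complement leaves the upper rows untouched while multiplying the constant last row by \(-\tau^{-1}\). The only cosmetic difference is that you bound the nonzero singular values of \(J_m\) through the eigenvalues of the Gram matrix \(\min(i,j)\), whereas the paper deletes the zero row and column and bounds the inverse of the resulting all-ones triangular matrix --- these are the same underlying fact.
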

\begin{proof}
	Consider the first diagonal block.
	The last row and first column are zero, so one singular value is zero with left and right singular vectors of \(e_n\) and \(e_1\), respectively.
	Removing that row and column gives an upper-triangular matrix of all 1's, the inverse of which is an upper-bidiagonal matrix with 1's on the diagonal and -1's on the off-diagonal~\cite{elkiesAnswer2normUpper2011}.
	Thus, the remaining singular values are at least \(1/2\).
	So, only one singular value will be modified.

	By treating the modified singular value separately, it can easily be seen that the inverse of the modified diagonal block is
	\[
		\widetilde{A}_{kk}^{-1} = \begin{bmatrix}
			0 & \tau^{-1} \\
			\mathrm{tridiag}(0, 1, -1) & 0
		\end{bmatrix}.
	\]
	Because of the lower-triangular zeros, all by the last row of the trailing matrix will remain unchanged by the Schur-complement; on the other hand, all of the elements of the last row become \(\tau^{-1}\).
	So, the next diagonal block will require the same additive modification and the Schur-complement will update each entry of the last row to
	\[
		\tau^{-1} - [\tau^{-1}, \dots, \tau^{-1}]\widetilde{A}_{kk}^{-1}[1,\dots,1]^T = -\tau^{-2}.
	\]
	By repeating this process, the entries of the last row of the trailing matrix will be \(-(-\tau^{-1})^k\) after \(k\) diagonal blocks.
	Thus, the max-norm growth is \(\tau^{-(n_t-1)}\).
\end{proof}

\section{Modification-Free BEAM}
Mirroring the question of how modifications affect singular values is the question of when \ac{beam} will not apply any modifications.
Ideally, no modifications will be applied for matrices for which regular block LU is stable to avoid the unnecessary cost of correcting for the modifications.

Note that block inversion implies
\[
	\norm{2}{(A_{1:k,1:k})^{-1}} \geq \norm{2}{(A_{k,k}^{(k)})^{-1}},
\]
and thus,
\begin{equation}
	\label{eq:lps-inv-vs-diag-block}
	\sigma_{\mathrm{min}}(A_{1:k,1:k}) \leq \sigma_{\mathrm{min}}(A_{k,k}^{(k)}).
\end{equation}
Hence, if, for all \(k\), we have \(\tau \leq \sigma_{\mathrm{min}}(A_{1:k,1:k})\), then \ac{beam} will not modify the matrix.
The condition is equivalent to \(\norm{2}{A}\norm{2}{(A_{1:k,1:k})^{-1}} \leq \hat{\tau}^{-1}\); recall that \cref{thm:schur-growth-bound} says that
\[
	\blockgrowth{2} \leq 1 + \max_{1\leq k \leq n_t-1}\norm{2}{A}\norm{2}{(A_{1:k,1:k})^{-1}}.
\]
This reinforces the notion that the behavior of \(\max_{1\leq k \leq n_t-1}\norm{2}{A}\norm{2}{(A_{1:k,1:k})^{-1}}\) is important for understanding this method.
Unfortunately, because of the directions of the implications, a matrix either having small growth or that is not modified does not provide an upper bound for \(\norm{2}{(A_{1:k,1:k})^{-1}}\) nor, by extension, anything meaningful about the other property.

Quantifying the looseness of the inequality in
\cref{eq:lps-inv-vs-diag-block} would provide significant insight into
the element growth in \ac{beam}.
Unfortunately, this is difficult in general.
Consider the triangular matrix, \(T\), with 1's on the diagonal and -1's on the off-diagonal.
Turing showed that \(\norm{\maxrm}{T^{-1}} = 2^{n-1}\)~\cite{turingRoundingoffErrorsMatrix1948}.
Thus, the smallest singular value is less than \(2^{1-n}\); however, LU with a block size of 1 (i.e., pointwise LU) results in \(A_{k,k}^{(k)} = 1\) for all \(k\).
This is the cause of exponential growth with partial pivoting for Wilkinson's notorious matrix.

While \((A_{1:k,1:k})^{-1}\) is difficult to work with in general, we can analyze it for specific classes of matrices.
First we consider strictly diagonally dominant matrices.
Let \(A\) be diagonally dominant by columns, and let \(\delta_c\) be it's dominating factor, that is
\[
	\delta_c = \min_{1\leq i\leq n} |A[i,i]| - \sum_{j\neq i} |A[j,i]|
\]
for dominance by columns.
So, the dominating factor of the leading principal submatrices is always at least \(\delta_c\).
Then, by a result of Varah~\cite{varahLowerBoundSmallest1975}, we have for all \(1\leq k\leq n\)
\begin{align*}
	\norm{2}{A[\range{1}{k}, \range{1}{k}]^{-1}} &\leq \sqrt{n}\norm{1}{A[\range{1}{k}, \range{1}{k}]^{-1}} \leq \delta_c^{-1}\sqrt{n}.
\end{align*}
Thus, if \(\tau \leq \delta_c^{-1}\sqrt{n}\), no modifications will be applied.
Similarly, if \(A\) is instead dominant by rows, no modification will be applied when \(\tau \leq \delta_r^{-1}\sqrt{n}\).
Finally, if \(A\) is dominant by both rows and columns, the condition is \(\tau \leq \delta_c^{-1/2}\delta_r^{-1/2}\).
This result can be generalized to all H-matrices.
If \(A\) is an H-matrix, then there exist diagonal matrices \(D_1, D_2\) with elements in the half-open interval \((0, 1]\) such that \(D_1A\) is strictly diagonally dominant by columns and \(AD_2\) is strictly diagonally dominant by rows~\cite{plemmonsMmatrixCharacterizationsInonsingularMmatrices1977}.
Let \(\delta_c^D\) be the column dominating factor of \(D_1A\) and \(\delta_r^D\) be the row dominating factor of \(AD_2\).
Then, by a generalization of Varah's result~\cite{vargaDiagonalDominanceArguments1976}, no pivoting will occur when
\begin{align*}
	\tau &\leq {\delta_c^D}^{-1}\sqrt{n}, \\
	\tau &\leq {\delta_r^D}^{-1}\sqrt{n}, \qquad \text{or}\\
	\tau &\leq (\delta_c^D\delta_r^D)^{-1/2}.
\end{align*}
Note that this also improves the criteria for matrices strictly diagonally dominant by columns when
\[
	|A[i, i]| - \sum_{j\neq i}|A[j, i]|
\]
varies drastically between columns and correspondingly for dominance by rows.

Note that Varah's result also addresses block diagonally dominant matrices~\cite{varahLowerBoundSmallest1975}, allowing our analysis to also extend to those matrices.
However, note that Varah's Cor.~3 has a typo and the \(\infty\)-norms in the definition of \(\beta\) should instead be \(1\)-norms.\footnotemark
\footnotetext{The corollary as written is contradicted by
\(
	\begin{bsmallmatrix}
		1.0\, & \,0.0\, & \,0.0\, & \,0.4 \\
		0.0\, & \,1.0\, & \,0.0\, & \,0.4 \\
		0.4\, & \,0.0\, & \,1.0\, & \,0.0 \\
		0.4\, & \,0.0\, & \,0.0\, & \,1.0 \\
	\end{bsmallmatrix}
\)
with a block size of 2.}
Let \(\Delta_c\) be the block dominance factor for columns with the \(1\)-norm and let \(\Delta_r\) be for rows with the \(\infty\)-norm.
Then, no modifications are applied when
\begin{align*}
	\tau &\leq \sqrt{n}(\Delta_c)^{-1} && \text{if block dominant in the \(1\)-norm by columns,} \\
	\tau &\leq \sqrt{n}(\Delta_r)^{-1} && \text{if block dominant in the \(\infty\)-norm by rows, or} \\
	\tau &\leq (\Delta_c\Delta_r)^{-1/2} && \text{if block dominant by both.}
\end{align*}
Furthermore, we expect the analysis of H-matrices to similarly generalize to block H-matrices.

Finally, for a symmetric positive definite matrix \(A\), no pivoting will occur when \(\hat{\tau}\cond{2}{A} \leq 1\).
Combining Cauchy's interlace theorem with a similar theorem for Schur complements~\cite[Cor. 2.4]{zhangSchurComplementIts2005}, we have
\(
	\lambda_i(A_{kk}^{(k)}) \geq \sigma_{k}(A_{1:k, 1:k}) \geq \sigma_n(A)
\).
Hence, \(\hat{\tau}\cond{2}{A} \leq 1\) implies
\[
	\tau \leq \cond{2}{A}^{-1}\norm{2}{A} \leq \sigma_n(A) \leq \sigma_i(A_{kk}^{(k)}).
\]
Therefore, no modification will be applied.

This analysis shows that \ac{beam} avoids modifying at least some types of well-behaved matrices that can be safely factored by regular block LU, meaning the overhead of using \ac{beam} instead of regular block LU will be negligible on those classes of matrices.
Note that all of the analysis in the section is built around \cref{eq:lps-inv-vs-diag-block} bounding the singular values of individual blocks by the smallest singular value of the entire leading principal submatrix.
However, that inequality will often be very loose, so the class of modification-free matrices will be larger than is proven here.

\bibliographystyle{siamplain}
\bibliography{dense-factorization}

\end{document}